\numberwithin{equation}{section}
\theoremstyle{definition} 
\newtheorem{theorem}{Theorem}[section]
\newtheorem{lemma}[theorem]{Lemma}
\newtheorem{definition}[theorem]{Definition}
\newtheorem{example}[theorem]{Example}
\newtheorem{remark}[theorem]{Remark}
\newtheorem*{theorem*}{Theorem} 
\newcommand{\mathsc}[1]{{\normalfont\textsc{#1}}}
\DeclareMathOperator{\cAut}{\mathscr{A}\mspace{-0.7mu}ut}
\DeclareMathOperator{\Tors}{Tors}
\DeclareMathOperator{\Pic}{\mathsc{Pic}}
\newcommand{\moncat}[1]{\mathscr{#1}}
\newcommand{\cA}{\moncat{A}}
\newcommand{\cB}{\moncat{B}}
\newcommand{\cE}{\moncat{E}}
\newcommand{\cL}{\moncat{L}}
\newcommand{\lto}{\longrightarrow}
\newcommand{\loccit}{loc.~cit.\xspace}
\begin{document}

\title{Symmetric Monoidal Bicategories and Biextensions}
\author{%
  Ettore Aldrovandi \\
  {\small \url{ealdrovandi@fsu.edu}}
  \and
  Milind Gunjal \\
  {\small \url{mgunjal@fsu.edu}}
  \and {\small Department of Mathematics, Florida State University, Tallahassee, FL 32306-4510}}
\date{}
\maketitle
\begin{abstract}
We study monoidal 2-categories and bicategories in terms of categorical extensions and the cohomological data they determine in appropriate cohomology theories with coefficients in Picard groupoids. In particular, we analyze the hierarchy of possible commutativity conditions in terms of progressive stabilization of these data. We also show that monoidal structures on bicategories give rise to biextensions of a pair of (abelian) groups by a Picard groupoid, and that the progressive vanishing of obstructions determined by the tower of commutative structures corresponds to appropriate symmetry conditions on these biextensions. In the fully symmetric case, which leads us fully into the stable range, we show how our computations can be expressed in terms of the cubical $Q$-construction underlying MacLane (co)homology.
\end{abstract}

\setcounter{tocdepth}{2}
\tableofcontents

\section{Introduction}
\label{sec:introduction}

It is well known (see, e.g.\ \cite{Breen}, \cite{Baues-Conduche}) that monoidal categories can be analyzed using categorical extensions. If $\cE$ is a monoidal category, in fact a monoidal group-like groupoid, we denote by $B=\pi_{0}(\cE)$ and $A=\pi_{1}(\cE)$ (or simply $B=\pi_{0}$ and $A=\pi_{1}$ if $\cE$ is clear from the context) its group of isomorphism classes of objects and the automorphism group of the unit object, respectively. Then $\cE$ can be written as an extension of $B$ group by the Picard groupoid associated to the suspension $\Sigma A = [A\to 0]$. The resulting invariant is a class in $H^{2}(B,\Sigma A) = H^{3}(B,A)=H^{3}(K(B,1),A)$. If $\cE$ is braided (resp.\ symmetric), so that $B$ is necessarily abelian and has a trivial $A$-module structure, it is also well known that the corresponding invariant lives the higher Eilenberg-MacLane cohomology groups $H^{4}(K(B,2),A)$ (resp.\ $H^{5}(K(B,3),A)$). In the latter—symmetric—case, we refer to the invariant as the $k$-invariant of the extension.

Breen put forward an alternative approach to studying monoidal categories by considering their commutator~\cite{Breen}. If $\cE$ is as above, with $A$ and $B$ abelian and $B$ acting trivially on $A$, its commutator is an $A$-torsor $E$ over $B\times B$ whose fiber $E_{x,y}$ over $(x,y)\in B\times B$ is the set of all arrows $YX\to XY$, where $X,Y$ are objects of $\cE$ representing the isomorphism classes $x$ and $y$, respectively. Remarkably, $E$ carries two compatible partial composition laws that give it the structure of a (weak) biextension. This is, in a somewhat imprecise but suggestive manner, an extension of $B$  by $A$ with respect to each of the two variables while keeping the other fixed \cite{SGA7,Mumford}. To give an idea, if $f\colon YX\to XY$, $f'\colon YX'\to X'Y$, and $g\colon Y'X\to XY'$, we define $f+_{1}f'$ and $f +_{2} g$ as the composites:
\begin{gather*}
  Y(XX')\lto (YX)X' \xrightarrow{Xf} (XY)X' \lto X(YX') \xrightarrow{Xf'} X(X'Y) \lto (XX')Y\\
  \intertext{and}
 (YY')X \lto Y(Y'X) \xrightarrow{Yg} Y(XY') \lto (YX)Y' \xrightarrow{fY'} (XY)Y'\lto X(YY'),
\end{gather*}
where the unnamed arrows are the associator isomorphisms. The ``weak'' attribute refers to the fact that, for a general monoidal category, the composition laws may not be commutative, even though $A$ and $B$ are, analogously to what happens in ordinary central extensions. The two composition laws are compatible by way of an ``interchange law'' that we will full explain below, as it plays a crucial role in our work, but for now let us note that, if $g'\colon Y'X'\to X'Y'$, we have the identity
\begin{equation*}
  (f +_{1} f') +_{2} (g +_{1} g') = (f +_{2} g) +_{1} (f' +_{2} g')
\end{equation*}
expressing the compatibility between the two partial composition laws.

Analyzing the biextension associated to a monoidal category $\cE$ allows to directly characterize the conditions under which it admits a symmetric structure. To briefly explain this point, let us recall that a biextension $E\to B\times B$ is \emph{anti-symmetric} if the symmetric biextension $E \wedge \sigma^{*}E$ is trivial as a symmetric biextension, where $\sigma\colon B\times B\to B\times B$ is the permutation map. If $\Delta\colon B \to B\times B$ is the diagonal, consider the $A$-torsor $\Delta^{*}E$, which is an extension of $B$ by $A$. Its square $(\Delta^{*}E)^{2}$ is canonically split if $E$ is anti-symmetric, and we further say that $E$ is \emph{alternating} if $\Delta^{*}E$ itself is split, in a manner compatible with the canonical splitting of $(\Delta^{*}E)^{2}$.

One of the fundamental results in \cite{Breen} is that the monoidal structure on $\cE$ is symmetric precisely when the associated biextension given by the commutator map is alternating \emph{and} the underlying biextension admits a trivialization compatible with the anti-symmetric structure. Biextensions are amenable to descriptions in terms of cocycles, and in the particular case of that associated to a symmetric monoidal category there is a tight relation with the symmetry data. Note that if $\cE$ is symmetric monoidal,  the associated biextension, by its very definition, is trivialized by the symmetry isomorphism $c_{X,Y}\colon YX\to XY\in E_{x,y}$, and the extent to which it is a trivialization of the full alternating structure gives a direct access to the $k$-invariant. While the main motivation in \loccit is to provide a geometric characterization for certain components of $H^{3}(B,A)$ (and more generally $H^{n}(B,A)$), analyzing the biextension gives an alternative approach to the decomposition of the extension $0 \to \Sigma A \to \cE \to B \to 0$ that proves useful when generalized to a higher categorical setting.

Our goal in this paper is to apply these ideas to analyze monoidal bicategories. To do so, we need to appropriately upgrade the notions of extensions and biextensions to their categorical analogs by considering extensions and biextensions of groups by a Picard category.

It is known that the Q complex gives stable cohomology of Eilenberg-MacLane space, so we work out the cocycle conditions explicitly in case of full symmetry and verify the results that we found using biextensions. The main theorems of this paper are Theorem~\ref{thmExt} and Theorem~\ref{FinalThm} where we describe the cocycles of $H^{3}(\pi_0{\mathbb{E}},\cAut_{\mathbb{E}}(I))$ and invariants of a biextension, for a monoidal bi-category $\mathbb{E}$, explicitly.

\subsection*{Structure of the paper}
We have collected various preliminary items related to Picard groupoids in section \ref{background}, including torsors, contracted product of the torsors, and cohomology with values in a Picard groupoid. Aside from the background, we can divide the rest in two ways. One way is dividing the analysis of monoidal categories and monoidal bi-categories, and the other is the analysis using extensions and biextensions. A reader can check the analysis of only monoidal categories by reading sections \ref{ExtMonCat}, \ref{biExtMonCat}, \ref{biExtSymMonCat}, similarly, a reader can check the analysis of only monoidal bi-categories by reading sections \ref{ExtMonbiCat}, \ref{biExtMonbiCat}, \ref{biExtSymMonbiCat}, and \ref{HML}. The paper is structured in the natural order of increment in the symmetric structure of the categories. We first analyze the associativity using extensions in section \ref{Extensions}, then commutativity using biextensions in section \ref{biExt}, and finally we verify the results using MacLane cohomology in the fully symmetric case in section \ref{fullSym}. 

To manage the space better, we have added some diagrams at the end in appendix \ref{diagrams}. 

\subsection*{Acknowledgments}

We wish to thank Niles Johnson for discussions and helpful comments.

\section{Background}
\label{background}

In this section, we recall some structures related to Picard groupoids, such as torsors for a Picard groupoid, and cohomology with values in a Picard groupoid.
    
\subsection{Torsors for a Picard groupoid}
\label{sec:tors-picard-group}

If $G$ is a group, or more generally a group-object in a topos, the category of $G$-torsors, or principal homogeneous spaces, is a well-studied entity in Algebraic Geometry and Topology. Here we recall the notion of $\mathscr{A}$-torsor, where $\mathscr{A}$ is a Picard groupoid, and we discuss some properties of the 2-category $\Tors(\cA)$, following ref.\ \cite{number_theory}, in which the topic is discussed in great detail. 

\begin{definition}
  A Picard groupoid is a symmetric, group-like groupoid.
\end{definition}
\begin{lemma}
  Picard groupoids form a 2-Category $\Pic$.
\end{lemma}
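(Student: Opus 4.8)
The plan is to exhibit the relevant data and then check the 2-category axioms, nearly all of which reduce to standard facts about functors and natural transformations. For objects I would take Picard groupoids; for $1$-morphisms $\cA \to \cB$, symmetric monoidal functors, i.e.\ functors $F$ equipped with a natural isomorphism $F(a)\otimes F(a') \to F(a\otimes a')$ together with a unit comparison, subject to the usual associativity and unit coherences and compatibility with the braidings; and for $2$-morphisms, monoidal natural transformations. Composition of $1$-morphisms is ordinary composition of functors, with the structural isomorphism of $G\circ F$ assembled from those of $F$ and $G$ in the evident way. Vertical and horizontal composition of $2$-morphisms are the corresponding operations on natural transformations. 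Identities are the identity functors and identity transformations, carrying their obvious (trivial) monoidal structure.

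The verification then splits into four parts. (i) \emph{Well-definedness of each composition law}: one must check that a composite of symmetric monoidal functors is again symmetric monoidal — the associativity, unit, and braiding-compatibility diagrams for $G\circ F$ commute by pasting the corresponding diagrams for $F$ and for $G$ — and similarly that horizontal and vertical composites of monoidal natural transformations are again monoidal. (ii) \emph{The category axioms for $1$-morphisms}: strict associativity and unitality of functor composition, which hold on the nose. (iii) For each ordered pair $(\cA,\cB)$, the hom-class is a category under vertical composition of $2$-cells, which is immediate since this is just the relevant full subcategory of the functor category. (iv) \emph{The interchange law} relating horizontal and vertical composition of $2$-cells, which is the familiar interchange law for natural transformations and requires nothing beyond the monoidal data already recorded.

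The only step with any real content is (i), and even there the work is routine diagram-pasting rather than anything conceptual; the fact that $\cA$ and $\cB$ are groupoids (so every structural arrow is automatically invertible) removes any subtlety about the direction of the comparison maps. Since the hom-categories and vertical composition are literally those of functor categories, and horizontal composition is literally functor composition, associativity and unitality hold strictly, so $\Pic$ is in fact a \emph{strict} $2$-category and no associator or unitor $2$-cells need to be produced or checked. (An alternative, even shorter route is to identify $\Pic$ with a full sub-$2$-category of the $2$-category of symmetric monoidal categories, and invoke that the defining conditions — group-like and groupoid — are stable under the operations involved; I would mention this but carry out the direct check above as it is self-contained.)
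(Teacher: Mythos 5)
Your proposal is correct and follows essentially the same route as the paper: both take Picard groupoids as objects, symmetric monoidal functors $(F,\lambda)$ as $1$-morphisms, and monoidal natural transformations as $2$-morphisms, with the remaining verifications being routine. The paper simply defers the axiom checks to a reference, whereas you sketch them explicitly (and correctly note the resulting structure is strict).
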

\begin{proof}
  Morphisms are monoidal functors compatible with the symmetric structure. That is, if $\cA$ and $\cB$ are Picard groupoid, a morphism is a pair $(F,\lambda)$ where $F\colon \cA \to \cB$ is a functor and $\lambda_{x,y}$ is a family of natural isomorphisms
  \begin{equation*}
    \lambda_{x,y} \colon F(x) + F(y) \lto F(x + y) 
  \end{equation*}
  which is compatible with the associativity and commutativity data in the obvious way. 2-morphisms are monoidal natural transformations, i.e., the natural transformation $\theta\colon  F_{1} \to F_{2}$ such that the following diagram commutes:
\begin{equation*}
  \begin{tikzcd}
    F_{1}(x)+F_{1}(y) \arrow[d, "\theta+\theta"'] \arrow[r,"\lambda^{1}_{x,y}"]
    & F_{1}(x+y) \arrow[d, "\theta"] \\
    F_{2}(x)+F_{2}(y) \arrow[r,"\lambda^{2}_{x,y}"] & F_{2}(x+y)
  \end{tikzcd}
\end{equation*}
Details can be found in \cite{SM2Cat}.
\end{proof}

\begin{definition}
  Let $\cA$ be a Picard groupoid. An $\cA$-torsor
  $\cL$ is a module category over $\cA$, i.e., there
  is a bifunctor:
  \begin{equation*}
    +\colon \cA\times \cL\to \cL 
  \end{equation*}
  together with natural isomorphism:
  \begin{equation*}
    a_{x,y,v}\colon (x+y)+v \cong x+(y+v), x,y\in \cA, v\in \cL, 
  \end{equation*}
  satisfying
  \begin{enumerate}
  \item the pentagon axiom,
  \item for any $x\in \cA$, the functor from $\cL$ to
    $\cL$ given by $v\mapsto x+v$ is an equivalence,
  \item for any $v\in \cL$, the functor from $\cA$ to
    $\cL$ given by $x\mapsto x+v$ is an equivalence of
    categories.
  \end{enumerate}
\end{definition}

\begin{definition}
If $\cL_{1}, \cL_2$ are $\cA$-torsors, then $\text{Hom}_{\cA}(\cL_{1},\cL_{2})$ is the category defined as follows. Objects are 1-morphisms, i.e., equivalences $F\colon \cL_{1}\to \cL_{2}$ together with isomorphism $\lambda\colon F(x+v)\cong x+F(v)$ such that the following diagram commutes:
\begin{equation*}
\begin{tikzcd}
F((x+y)+v) \arrow[d] \arrow[r] & (x+y)+F(v) \arrow[d] \\
F(x+(y+v)) \arrow[r]           & x+(y+F(v))          
\end{tikzcd}
\end{equation*}
Morphisms are natural transformations $\theta\colon F_1\to F_2$ such that the following diagram commutes:
\begin{equation*}
\begin{tikzcd}
F_{1}(x+v) \arrow[d, "\theta"'] \arrow[r] & x+F_{1}(v) \arrow[d, "\theta"] \\
F_{2}(x+v) \arrow[r]                      & F_{2}(x+v)                    
\end{tikzcd}
\end{equation*}
\end{definition}
\begin{remark}
\begin{enumerate}
\item There is an equivalent definition of $\cA$-torsor. $\cL$ is an $\cA$-torsor, if $\cL$ is a module category over $\cA$ such that the map $(-, p_2)\colon \cA\times \cL\to \cL\times \cL$ is a natural equivalence, where $p_2$ is the canonical projection map.
\item We have defined the notion of left torsors. That of right torsors is defined in the same way. If $\cL$ is a left $\cA$-torsor with an action $+_L$, one can define $\cL$ as a right torsor with an action $v +_R x \colon = x^{-1} +_L v$ for $v \in \cL, x \in \cA$. 
\end{enumerate}
\end{remark}
\begin{remark}
\begin{enumerate}
\item If $\cL_{1}, \cL_{2}$ are $\cA$-torsors, then $\text{Hom}_{\cA}(\cL_{1},\cL_{2})$ forms a category, i.e., $\cA$-torsors form a 2-category.
\item Moreover, it is a category enriched over itself, i.e., if $\cL_{1}, \cL_{2}$ are $\cA$-torsors, then $\text{Hom}_{\cA}(\cL_{1},\cL_{2})$ is also a $\cA$-torsor. (See \cite{number_theory} for the details).
\end{enumerate}
\end{remark}
One does not require Picard condition or even braiding to define torsors, i.e., one can similarly define torsors for group-like groupoids. (See, e.g. \cite{BreenFrench}).
\subsubsection*{Contracted product}
We will need to consider the notion of the contracted product of torsors for Picard groupoids in some detail. It is introduced in \cite{BreenFrench}.
\begin{definition}
Let $\cA$ be a Picard groupoid, $\cB$ and $\mathscr{C}$ be $\cA$-torsors, then the contracted product is the category, $\cB\wedge^{\cA}\mathscr{C}$ as follows:

Objects are $(b,c)$ for $b \in \text{Ob}(\cB), c\in \text{Ob}(\mathscr{C})$, morphisms are equivalence classes of triples $[(f,h,g)]$, where $h \in \text{Ob}(\cA)$, $f\colon  b \to b'\cdot h$, and $g\colon h\cdot c\to c'$ are morphisms of $\cB$, and $\mathscr{C}$, respectively. Two triples $(f,h,g)$ and $(f',g',h')$ are equivalent if there is a morphism $\gamma\colon  h\to h'$ such that the diagrams commute:
\begin{equation*}
\begin{tikzcd}
&  & b'\cdot h \arrow[dd, "id_{b'}\cdot \gamma"] &  & h\cdot c \arrow[dd, "\gamma\cdot id_{c}"'] \arrow[rrd, "g"] &  &    \\
b \arrow[rrd, "f'"'] \arrow[rru, "f"] &  &                                             &  &                                                             &  & c' \\
&  & b'\cdot h'                                  &  & h'\cdot c \arrow[rru, "g'"']                                &  &   
\end{tikzcd}
\end{equation*}
The composition of two morphisms $(f,h,g)\colon  (b,c)\to (b',c')$ and $(f',h',g')\colon (b',c')\to (b'',c'')$ is defined as follows: 
\begin{equation*}
b \xrightarrow{f} b'\cdot h \xrightarrow{f'\cdot h} (b''\cdot h')\cdot h \xrightarrow{\sim} b''\cdot(h'\cdot h)
\end{equation*}
\begin{equation*}
c \xrightarrow{g} c'\cdot h \xrightarrow{g'\cdot h} (c''\cdot h')\cdot h \xrightarrow{\sim} c''\cdot(h'\cdot h)
\end{equation*}
and $h'\cdot h$.

$\cB\wedge^{\cA} \mathscr{C}$ is also an $\cA$-torsor with the action:
\begin{equation*}
h\cdot(b,c) = (b\cdot h,c).
\end{equation*}
\end{definition}
\begin{remark}
For any $b,c,h$, we have the isomorphism $(b\cdot h,c) \xrightarrow[]{\sim} (b,h\cdot c)$ by the triple $(id_{b\cdot h},h,id_{h\cdot c})$
\end{remark}

\subsection{Cohomology with values in a Picard category}
\begin{definition}
Let $\cA_{i}$'s be Picard groupoids. A complex of Picard groupoids $\cA_{\bullet}$ looks like:

\begin{equation*}
\begin{tikzcd}
& {}                                                                                        &                                                                                                            & {}                                                                                 &                                                                                              & {}                                                                &                                       &        \\
\cA_0 \arrow[r, "\delta"] \arrow[r] \arrow[rr,"0", bend left=49] & \cA_1 \arrow[r, "\delta"] \arrow[rr,"0"', bend right=49] \arrow[u, "\chi_0", Rightarrow, shorten <= 0em, shorten >= 0.5em] & \cA_2 \arrow[r, "\delta"] \arrow[r] \arrow[rr, dashed,"0", bend left=49] \arrow[d, "\chi_1", Rightarrow, shorten <= 0em, shorten >= 0.5em] & \cdots \arrow[r, "\delta"] \arrow[rr,"0"', bend right=49] \arrow[u, "\chi_{2}", Rightarrow, shorten <= 0em, shorten >= 0.5em] & \cA_{n-1} \arrow[r, "\delta"] \arrow[rr,"0", bend left=49] \arrow[d, "\chi_{n-2}", Rightarrow, shorten <= 0em, shorten >= 0.5em] & \cA_{n} \arrow[r, "\delta"] \arrow[u, "\chi_{n-1}", Rightarrow, shorten <= 0em, shorten >= 0.5em] & \cA_{n+1} \arrow[r, "\delta"] & \cdots \\
&                                                                                           & {}                                                                                                         &                                                                                    & {}                                                                                           &                                                                   &                                       &       
\end{tikzcd}
\end{equation*}
\vspace{-0.15in}
A chain of monoidal functors that satisfies the \textit{wave}:

\begin{equation*}
\begin{tikzcd}
& {}                                                                                                                                        & {}                                                                                                                      &    \\
\cA_n \arrow[r, "\delta"] \arrow[r] \arrow[rr, "0", bend left=49] \arrow[rrr, "0", bend left=60] \arrow[rrr, "0"', bend right=60] & \cA_{n+1} \arrow[r, "\delta"] \arrow[rr, "0"', bend right=49] \arrow[u, "\chi_n", shorten <= 0em, shorten >= 0.5em, Rightarrow] \arrow[d, "\text{can}", Rightarrow] & \cA_{n+2} \arrow[r, "\delta"] \arrow[r] \arrow[d, "\chi_{n+1}", shorten <= 0em, shorten >= 0.5em, Rightarrow] \arrow[u, "\text{can}", Rightarrow] & {} \\
& {}                                                                                                                                        & {}                                                                                                                      &   
\end{tikzcd}
\end{equation*}
\end{definition}

\begin{definition}[Cohomology of a complex of Picard groupoids]\cite{Ulbrich}
Given a complex $\cA_{\bullet}$ of Picard groupoid as above, we define the cohomology in steps as follows:
\begin{enumerate}
\item A category of $n$-pseudococycles $\mathscr{P}^{n}(\mathscr{A_{\bullet}})$ is defined as $\{(P,g) | P \in \cA_{n}, g\colon \delta(P)\to I \in \text{Hom}_{\cA_{n+1}}\}$    
\item A subcategory of $n$-cocycles $\cL^{n}(\cA_{\bullet})$ of $\mathscr{P}^{n}(\cA_{\bullet})$ has the objects for which 
    
\begin{equation*}
\begin{tikzcd}
I_{n+2} \arrow[r, "\chi^{-1}_{n}"] & \delta(\delta(P)) \arrow[r, "\delta(g)"] & \delta(I_{n+1}) \arrow[r] & I_{n+2}\  = \ I_{n+2} \arrow[r, "id"] & I_{n+2}
\end{tikzcd}
\end{equation*}
\item A subcategory of $n$-coboundaries $\cB^{n}(\cA_{\bullet})$ of $\cL^{n}(\cA_{\bullet})$ of objects of the form \begin{equation*}
(\delta(Q),\chi_{Q}), Q\in \cA_{n-1}.
\end{equation*}
\item Finally we define the cohomology of the complex as the Picard groupoid 
\begin{equation*}
\mathscr{H}^{n}(\cA_{\bullet})\colon = \cL^{n}(\cA_{\bullet})/\cB^{n}(\cA_{\bullet}).
\end{equation*}
\end{enumerate}
\end{definition}
\begin{remark}
For our purposes, we will consider the cohomology group $H^{n}(\mathscr{C_{\bullet}}) \colon = \pi_{0}(\mathscr{H}^{n}(\cA_{\bullet})).$
\end{remark}
Using this definition of cohomology of a complex of Picard categories we can define a group cohomology with values in a Picard category as follows.

Let $\cA$ be a Picard category. Given any simplicial set $X_{\bullet}$, consider the simplicial category $\cA^{X_{\bullet}}$, where $\cA^{X_{i}} = \text{Funct}(X_{i}\to \cA)$ by letting $X_{i}$ as a discrete category.

\begin{equation*}
\begin{tikzcd}
\cA^{X_0} \arrow[r, shift left] \arrow[r, shift right] & \cA^{X_1} \arrow[r, shift left=2] \arrow[r, shift right=2] \arrow[r] & \cA^{X_2} \arrow[r, shift left=3] \arrow[r, shift left] \arrow[r, shift right] \arrow[r, shift right=3] & \cdots
\end{tikzcd}
\end{equation*}

Note that we can borrow the Picard groupoid structure from $\cA$ to make $\cA^{X_{i}}$ a Picard groupoid too. As shown in \cite{CohomInPic}, by taking alternating sums we obtain a complex of Picard groupoids:

\begin{equation*}
\begin{tikzcd}
& {}                                                                                                  &                                                                           &        \\
\cA^{X_0} \arrow[r, "\partial_0"] \arrow[rr, bend left=49] & \cA^{X_1} \arrow[r, "\partial_1"] \arrow[rr, bend right=49] \arrow[u, "\chi_0", Rightarrow, shorten <= 0em, shorten >= 0.5em] & \cA^{X_2} \arrow[r, "\partial_2"] \arrow[d, "\chi_1", Rightarrow, shorten <= 0em, shorten >= 0.5em] & \cdots \\
&                                                                                                     & {}                                                                        &       
\end{tikzcd}
\end{equation*}

with the boundary maps as follows:
\begin{equation*}
\partial_{n} = \sum_{i = 0}^{n+1}(-1)^{i}d_{n}^{i^{*}}.
\end{equation*}
  
Where $d^{i^{*}}_{n}\colon  \cA^{X_{n}} \to \cA^{X_{n+1}} \colon  (X_{n}\xrightarrow{f} \cA) \mapsto (X_{n+1}\xrightarrow{d^{i}_{n+1}} X_{n}\xrightarrow{f} \cA)$

We denote the cohomology of this complex as $H^{n}(X_{\bullet},\cA)$.

\begin{definition}\label{defCohomoInPic}
Let $G$ be a group, $\cA$ be a Picard groupoid we define the cohomology $H^{n}(G,\cA) = H^{n}(\textbf{B}G,\cA)$, where $\textbf{B}G$ is the bar construction.
\end{definition}

\section{Categorical Extensions}\label{Extensions}
It is well-known that one can analyze a monoidal category by considering a categorical extension of its $\pi_{0}$ group by the Picard category $\Sigma(\pi_{1})$ and analyzing the K-invariant, i.e., some cohomology groups of the extension. We recall the relevant details quickly and develop the theory for monoidal bi-categories.
\begin{definition}
\begin{enumerate}
    \item Let $G$ be a group, $\cA$ be a Picard groupoid. A monoidal category $\mathscr{E}$ is called a categorical extension of $G$ by $\cA$ if there exists an exact sequence of functors, i.e., concatenation of two functors is homotopically zero, as follows:
\begin{equation*}
0 \to \cA \to \mathscr{E} \to G \to 0 
\end{equation*}
where $G$ is viewed as a discrete category.
\item $G$ acts canonically on $\cA$. If this action is trivial, we call the extension a central extension.
\end{enumerate}
\end{definition}
In our case, the action of $G$ will always be trivial on $\cA$.  

\begin{example}\label{CatExtEg}
Given a functor from a Picard groupoid $\cA$ to a monoidal category $\mathscr{E}$, we can consider a group $G = \pi_{0}(\mathscr{E})/\pi_{0}(\cA)$, where $\pi_{0}(-)$ is the group generated by the isomorphism classes of the objects of the category. Then this creates a categorical extension of $G$ by $\cA$.
\end{example}
Group extensions have a nice property that the category of extensions of a group $G$ by an abelian group $A$, is monoidal, and its $\pi_{0}$ group is isomorphic to $H^{2}(G,A)$. In the case of categorical extensions, we can get the classification in a similar way. Let $G$ be group, and $\cA$ be a Picard category, then Ext($G,\cA$) is a monoidal category, and  $\pi_{0}(\text{Ext}(G,\cA))\cong H^{2}(G,\cA)$. (See, e.g. \cite{Ulbrich},\cite{CohomInPic}).

\subsection{Categorical Extensions and Monoidal categories}\label{ExtMonCat}
Starting with a monoidal category $\mathscr{E}$, we can consider the Picard groupoid $\cA = \Sigma (\pi_{1}(\mathscr{E}))$, where $\pi_{1}(\mathscr{E})$ is the automorphism group of the monoidal unit of $\mathscr{E}$,  $ \text{Aut}_{\mathscr{E}}(I)$. So we can consider a categorical extension as in example \ref{CatExtEg}. Notice that $G$ now becomes $\pi_{0}(\mathscr{E})$, as $\pi_{0}(\cA)$ is now trivial. 
\begin{equation*}
0\to \Sigma (\pi_{1}(\mathscr{E}))\to \mathscr{E}\to \pi_{0}(\mathscr{E})\to 0.
\end{equation*}
So this corresponds with a cocycle in $H^{2}(\pi_{0}(\mathscr{E}),\Sigma(\pi_{1}(\mathscr{E})))$, which is isomorphic to $H^{3}(\pi_{0}(\mathscr{E}),\pi_{1}(\mathscr{E}))$. One can explicitly construct the cocycle as shown in \cite{BreenSchreier}. We recall the cocycles for the ease of readers.

\subsubsection*{Cohomology}\label{Cohomology of Extensions}
Consider an arbitrary section $s\colon \pi_{0}(\mathscr{E}) \to \mathscr{E}$ of $p\colon \mathscr{E} \to \pi_{0}(\mathscr{E})$. Due to the monoidal structure of $\mathscr{E}$, for each $x,y \in \pi_{0}(\mathscr{E}),$ there exists an isomorphism $ c_{x,y}\colon s(x)\otimes s(y)\xrightarrow{\cong} s(xy)$. As the monoidal product is associative up to an isomorphism, there exists an $f_{x,y,z} \in \pi_{1}(\mathscr{E})$. Since the association has to satisfy the pentagon axiom, $f$ must also satisfy a pentagon which is the cocycle condition. Hence $[f] \in H^{3}(\pi_{0}(\mathscr{E}), \pi_{1}(\mathscr{E}))$. This is the cocycle that corresponds to the monoidal category $\mathscr{E}$ we started with. 
         
\subsection{Categorical Extensions and Monoidal bi-categories}\label{ExtMonbiCat}
To analyze monoidal bi-categories, we upgrade the abelian groups to Picard groupoids. Now we see the definition of the fundamental groupoid and prove that it is indeed a Picard groupoid in this case.
\begin{definition}
Let $\mathbb{E}$ be a monoidal bi-category, $\cA = \cAut_{\mathbb{E}}(I)$ be the category of automorphisms of the monoidal unit. Then for each object $X \in \mathbb{E}$, the right unitor, $r_{X}$ induces the functor $\eta_{X}\colon \cA\to \cAut_{\mathbb{E}}(X)$ as follows:
\begin{equation*}
\begin{tikzcd}
X \arrow[r, "\eta_{X}(f)"] \arrow[rd, "\zeta_{f}", Rightarrow] & X                      \\
XI \arrow[u, "r_{X}"] \arrow[r, "Xf"']                         & XI \arrow[u, "r_{X}"']
\end{tikzcd}
\end{equation*}
along with the following 2-morphism, for a $g\colon X \to Y$ in $\mathscr{C}$:    
\begin{equation*}
\begin{tikzcd}
& \cA \arrow[ld, "\eta_{X}"'] \arrow[rd, "\eta_{Y}"] & {}               \\
\cAut(X) \arrow[rr, "g_{*}"'] \arrow[rru, "\mu_{g}"', Rightarrow, shorten <= 1.7em, shorten >= 2.8em,xshift = -15,yshift = -5] &                                                            & \cAut(Y)
\end{tikzcd}
\end{equation*}

(Here $\mu_{g}\colon  g\eta_{X}(-)g^{\bullet} \Rightarrow \eta_{Y}(-)$).    
\end{definition}

\begin{lemma}
    Let $\mathbb{E}$ be a monoidal bi-category, using properties of $r_{X}$, the following coherence conditions for $\eta_{X}$ hold. 
\begin{enumerate}
    \item For $f,f'\colon I\to I$, $\alpha\colon  f \Rightarrow f'$:
\begin{equation*}
\begin{tikzcd}
&                                      &                         &   &                                                                                                                        & {} \arrow[dd, "\eta_{X}(\alpha)", Rightarrow, shorten <= 1.5em, shorten >= 1.5em] &               \\
X \arrow[rr, "\eta_{X}(f)"] \arrow[rrdd, "\zeta_{f}", Rightarrow, shorten <= 0em, shorten >= 0.5em, yshift = 5]                 &                                      & X                       &   & X \arrow[rr, "\eta_{X}(f)", bend left] \arrow[rr, "\eta_{X}(f')"', bend right] \arrow[rrdd, "\zeta_{f'}"', Rightarrow, Rightarrow, shorten <= 0.3em, shorten >= 0em, yshift = -5] &                                               & X             \\
& {} \arrow[dd, "X\alpha", Rightarrow, shorten <= 1.2em, shorten >= 1.2em] &                         & = &                                                                                                                        & {}                                            &               \\
XI \arrow[uu, "r_{X}"] \arrow[rr, "Xf", bend left] \arrow[rr, "Xf'"', bend right] &                                      & XI \arrow[uu, "r_{X}"'] &   & XI \arrow[uu] \arrow[rr, "Xf'"']                                                                                       &                                               & XI \arrow[uu] \\
& {}                                   &                         &   &                                                                                                                        &                                               &              
\end{tikzcd}
\end{equation*}
\item For $g,g'\colon  X \to Y$ and $\beta\colon  g \Rightarrow g'$:
\begin{equation*}
\begin{tikzcd}
& \cA \arrow[ld, "\eta_{X}"'] \arrow[rd, "\eta_{Y}"] & {}               &   &                                                                          & \cA \arrow[ld, "\eta_{X}"'] \arrow[rd, "\eta_{Y}"] & {}               \\
\cAut(X) \arrow[rr, "g'_{*}"] \arrow[rru, "\mu_{g'}", Rightarrow, shorten <= 1.2em, shorten >= 3.7em] \arrow[rr, "g_{*}"', bend right=49] &                                                            & \cAut(Y) & = & \cAut(X) \arrow[rr, "g_{*}"'] \arrow[rru, "\mu_{g}"', Rightarrow, shorten <= 1.2em, shorten >= 2.4em,xshift = -15, yshift = -8] &                                                            & \cAut(Y) \\
& {} \arrow[uu, "\beta_{*}", Rightarrow, shorten <= 2.2em, shorten >= 1.2em,yshift = -20]                     &                  &   &                                                                          &                                                            &                 
\end{tikzcd}
\end{equation*}
\item For $X\xrightarrow{g} Y\xrightarrow{h} Z$ then the following diagram commutes:
\begin{equation*}
\begin{tikzcd}
& \cA \arrow[rdd, "\eta_{Z}"] \arrow[ldd, "\eta_{X}"']                &                  &   &                                                                        & \cA \arrow[ldd, "\eta_{X}"'] \arrow[rdd, "\eta_{Z}"] \arrow[ddd, "\eta_{Y}"] &                  \\
& {}                                                                          &                  & = &                                                                        &                                                                                      & {}               \\
\cAut(X) \arrow[rr, "(gf)_{*}"] \arrow[rd, "f_{*}"'] \arrow[ru, "\mu_{gf}"', Rightarrow,xshift = 25, yshift = 5] & {}                                                                          & \cAut(Z) &   & \cAut(X) \arrow[rd, "f_{*}"'] \arrow[r, "\mu_{f}", Rightarrow] & {}                                                                                   & \cAut(Z) \\
& \cAut(Y) \arrow[ru, "g_{*}"'] \arrow[u, "{\rho_{f,g}}", Rightarrow] &                  &   &                                                                        & \cAut(Y) \arrow[ru, "g_{*}"'] \arrow[ruu, "\mu_{g}"', Rightarrow, shorten <= 1em, shorten >= 2.5em,yshift = 10, xshift = -13]             &                 
\end{tikzcd}
\end{equation*}
Here $\rho_{f,g}$ exists as $f^{\bullet}g^{\bullet} \cong (gf)^{\bullet}$, but need not be equal on the nose.

\begin{enumerate}
    \item In the case of a chain of morphisms:
\begin{equation*}
X\xrightarrow{f} Y\xrightarrow{g}Z\xrightarrow{h} W 
\end{equation*}
We get the following commutative diagram of $\rho_{-,-}\colon$ 
\begin{equation*}
\begin{tikzcd}
& \cAut(W)                                                            &                                                                                &   &                                                                & \cAut(W)                                                                                        &                                                                               \\
& {}                                                                          &                                                                                & = & {}                                                             &                                                                                                         &                                                                               \\
\cAut(X) \arrow[rr, "(gf)_{*}"] \arrow[rd, "f_{*}"'] \arrow[ruu, "(hgf)_{*}"] & {}                                                                          & \cAut(Z) \arrow[luu, "h_{*}"'] \arrow[lu, "{\rho_{gf,h}}", Rightarrow,xshift = -20, yshift = 5] &   & \cAut(X) \arrow[ruu, "(hgf)_{*}"] \arrow[rd, "f_{*}"'] & {}                                                                                                      & \cAut(Z) \arrow[luu, "h_{*}"'] \arrow[l, "{\rho_{g,h}}"', Rightarrow] \\
& \cAut(Y) \arrow[ru, "g_{*}"'] \arrow[u, "{\rho_{f,g}}", Rightarrow] &                                                                                &   &                                                                & \cAut(Y) \arrow[ru, "g_{*}"'] \arrow[uuu, "(hg)_{*}"] \arrow[luu, "{\rho_{f,hg}}", Rightarrow, shorten <= 1em, shorten >= 2.5em,yshift = 10, xshift = 13] &                                                                              
\end{tikzcd}
\end{equation*}
\begin{equation*}
\rho_{f,g}\cdot \rho_{gf,h} = \rho_{g,h}\cdot \rho_{f,hg}.
\end{equation*}
\end{enumerate}
\begin{equation*}
\begin{tikzcd}
(XY)I \arrow[r, "(XY)f"] \arrow[d, "\cong"'] \arrow[dd, "r_{XY}"', bend right=65] & (XY)I \arrow[d, "\cong"] \arrow[dd, "r_{XY}", bend left=65]    &  &   & (XY)I \arrow[r, "(XY)f"] \arrow[dd, "r_{XY}"'] & (XY)I \arrow[dd, "r_{XY}"] \arrow[ldd, "\zeta_{f}", Rightarrow] \\
X(YI) \arrow[d, "Xr_{Y}"'] \arrow[r, "X(Yf)"]                                     & X(YI) \arrow[d, "Xr_{Y}"] \arrow[ld, "X\zeta_{f}", Rightarrow] &  & = &                                                &                                                                 \\
XY \arrow[r, "X\eta_{Y}(f)"']                                                     & XY                                                             &  &   & XY \arrow[r, "X\eta_{Y}(f)"']                  & XY                                                             
\end{tikzcd}
\end{equation*}
This implies: $\eta_{XY} = X\eta_{Y} $
\end{enumerate}
\end{lemma}
    
    Using the results from this lemma, we are now set to prove that the fundamental groupoid, $\Pi_{1}(\mathbb{E}) = \cAut_{I}(\mathscr{E})$ is a Picard groupoid.
    \begin{lemma}
        Let $\mathbb{E}$ be a monoidal bi-category, then $\Pi_{1}(\mathbb{E}) = \cAut_{\mathbb{E}}(I)$ is a Picard groupoid.
    \end{lemma}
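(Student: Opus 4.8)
The plan is to extract the Picard structure on $\cA=\cAut_{\mathbb{E}}(I)$ from the two a priori different multiplications available on auto-equivalences of the unit: horizontal composition of $1$-cells inside the bicategory $\mathbb{E}$, and the monoidal product $\otimes$ transported along the unit equivalence $I\otimes I\simeq I$. The identity $\eta_{XY}=X\eta_Y$ and the mediating $2$-cells $\zeta_f,\mu_g,\rho_{f,g}$ of the preceding lemma are precisely the coherence one needs in order to run a categorified Eckmann--Hilton argument, whose output is a symmetric, group-like monoidal structure, i.e.\ a Picard structure.

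First I would dispose of the claims not involving commutativity. By construction $\cA$ is a groupoid: its objects are equivalences $I\to I$ and its morphisms invertible $2$-cells. Its monoidal structure is horizontal composition of endo-$1$-cells of $I$, with unit object $\id_I$; associativity is the associator of $\mathbb{E}$ evaluated at triples of $1$-cells $I\to I$, and the pentagon axiom is one of the bicategory axioms. By naturality of the unitors, the cells $\zeta_f$, and bicategorical coherence (as in the monoidal-category case), this structure agrees, up to coherent isomorphism, with the one obtained by transporting $f\otimes g\colon I\otimes I\to I\otimes I$ along the unitor $r_I$ — the form I would actually use for diagram chases. Group-likeness is immediate: each object $f$ of $\cA$ is an equivalence, so a chosen adjoint pseudo-inverse $f^{\bullet}$ represents an inverse object, the unit and counit $2$-cells supplying isomorphisms $f\circ f^{\bullet}\cong\id_I\cong f^{\bullet}\circ f$ compatible with the constraints.

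The substance is the symmetric structure, and this is where I expect the real work to be. The pseudofunctoriality constraint of $\otimes\colon\mathbb{E}\times\mathbb{E}\to\mathbb{E}$, specialised to $1$-cells of $I$, supplies an interchange isomorphism between $(f\otimes g)\circ(f'\otimes g')$ and $(f\circ f')\otimes(g\circ g')$; taking $f'=g=\id_I$ in one copy and $f=g'=\id_I$ in another, and pasting in the unitor coherences together with $\zeta,\mu,\rho$, yields a natural isomorphism $\beta_{f,g}\colon f\circ g\xrightarrow{\ \sim\ }g\circ f$. One then checks the hexagon axioms — which unwind to compatibility of this interchanger with the associator and with the pentagonator and unitor modifications of $\mathbb{E}$ — and, crucially, that $\beta$ is a \emph{symmetry}, $\beta_{g,f}\circ\beta_{f,g}=\id_{f\circ g}$, by pasting the interchanger against itself in the two orders and cancelling through those same coherence cells, using the full force of the preceding lemma (notably $\eta_{XY}=X\eta_Y$ and the relation $\rho_{f,g}\cdot\rho_{gf,h}=\rho_{g,h}\cdot\rho_{f,hg}$). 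This promotion of the Eckmann--Hilton comparison to a symmetry is the main obstacle; the rest is a transcription of standard coherence, but the symmetry step requires a genuine pasting-diagram computation of the kind collected in the appendix, with no conceptual shortcut beyond the Eckmann--Hilton/stabilisation mechanism itself.
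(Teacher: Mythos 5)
Your proposal matches the paper's proof in essentials: both take composition as the monoidal product, treat group-likeness as immediate, and extract the commutativity isomorphism $fg\cong gf$ by a categorified Eckmann--Hilton argument --- the paper phrases it as the 2-cell $\mu_f(g)\colon f\,\eta_I(g)\,f^{\bullet}\Rightarrow \eta_I(g)$ together with $\eta_I\cong\mathrm{id}$, which is exactly your interchanger-plus-unitor construction of $\beta_{f,g}$. If anything the paper's proof is terser than yours, since it does not address the hexagon axioms or the condition $\beta_{g,f}\circ\beta_{f,g}=\mathrm{id}$ that you correctly single out as the remaining substantive check.
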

    \begin{proof}
    Let $f,g \in \text{Ob}(\cA)$, then we define a monoidal product $f\cdot g$ by the composition $g\circ f$. Being group-like groupoid is trivial to verify. To check that it is symmetric, notice that $\eta_{I}$ is identity. So for any $f,g \in \text{Ob}(\cA)$, we get $\mu_{f} (g)\colon  f\eta_{I}(g)f^{\bullet} \to \eta_{I}(g)$. This in turn gives $\mu_{f}(g)f\colon f\eta_{I}(g)f^{\bullet}f \cong f\eta_{I}(g)\to \eta_{I}(g)f$. So $\eta_{I}$ being an identity gives us a symmetry isomorphism between $fg$ and $gf$. 
    \end{proof}
    \begin{remark}
    Since $\cA$ is a Picard groupoid, the entire theory can be developed by considering left unitors instead of right unitors while defining the functors $\eta$.
    \end{remark}
    \begin{lemma}
    Let $\mathbb{E}$ be a monoidal bi-Category, then for any $X \in \text{Ob}(\mathbb{E})$, $\cAut_{\mathbb{E}}(X)$ is an $\cAut_{\mathbb{E}}(I)$-torsor.
\end{lemma}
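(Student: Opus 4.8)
The plan is to turn $\cAut_{\mathbb{E}}(X)$ into a module category over $\cA=\cAut_{\mathbb{E}}(I)$ by transporting the monoidal structure of $\mathbb{E}$ along the functor $\eta_X$, and then to verify the three clauses in the definition of an $\cA$-torsor. For the action I would set $f+\phi:=\eta_X(f)\circ\phi$ for $f\in\cA$ and $\phi\in\cAut_{\mathbb{E}}(X)$, where $\circ$ is composition of $1$-automorphisms of $X$ in $\mathbb{E}$, and functorially in $2$-cells (apply $\eta_X$ to the first argument and whisker in $\cAut_{\mathbb{E}}(X)$ on the second); this is a bifunctor since $\eta_X$ is a functor and composition in $\mathbb{E}$ is. The associativity constraint $a_{f,g,\phi}\colon(f+g)+\phi\xrightarrow{\ \sim\ }f+(g+\phi)$ is then assembled from two pieces: the coherence isomorphism $\eta_X(f+g)\cong\eta_X(f)\circ\eta_X(g)$ expressing that $\eta_X$ is monoidal — which is read off from the definition of $\eta_X$ through $r_X$, using that $X\otimes(-)$ preserves composition and splitting the cells $\zeta$ accordingly, and which is essentially the content of the preceding Lemma — composed with the associator $2$-cell of $\mathbb{E}$ for the triple $\eta_X(f),\eta_X(g),\phi$.

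With the constraint fixed, the pentagon axiom for the module structure is obtained by pasting the pentagonator of $\mathbb{E}$ applied to $\eta_X(f),\eta_X(g),\eta_X(h),\phi$ to the coherence satisfied by the monoidality cells of $\eta_X$ along the chain $f,g,h$ (again supplied by the Lemma, cf.\ the displayed identity $\rho_{f,g}\cdot\rho_{gf,h}=\rho_{g,h}\cdot\rho_{f,hg}$). This is the step I expect to be the main obstacle: it is a diagram chase among several $2$-cells, and rather than expand everything by hand I would isolate the ``$\eta_X$ part'' and the ``associator part'' of the pentagon, reduce each to an already-established coherence, and invoke the coherence theorem for monoidal bicategories to conclude that the full diagram of $2$-cells commutes.

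It then remains to check clauses (2) and (3). For (2), fix $f\in\cA$; since $X$ is invertible the object $\eta_X(f)$ is an equivalence $X\to X$ (with pseudo-inverse $\eta_X(f^{-1})$ up to the monoidality cell), and post-composition with an equivalence $1$-cell is an equivalence of hom-categories in any bicategory, so $\phi\mapsto f+\phi$ is an equivalence $\cAut_{\mathbb{E}}(X)\to\cAut_{\mathbb{E}}(X)$. For (3), fix $\phi$; the functor $f\mapsto\eta_X(f)\circ\phi$ is $\eta_X$ followed by post-composition with the equivalence $\phi$, so it suffices to show $\eta_X\colon\cA\to\cAut_{\mathbb{E}}(X)$ is an equivalence of categories. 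Here I would use that, up to the invertible $2$-cells $\zeta$, $\eta_X$ is the composite of $X\otimes(-)\colon\cA\to\cAut_{\mathbb{E}}(X\otimes I)$ (this is the identity $\eta_{XI}=X\eta_I$ of the Lemma together with $\eta_I=\id$) with conjugation by the equivalence $r_X$: conjugation by an equivalence is an equivalence, and $X\otimes(-)$ is an equivalence on these automorphism categories because $X$ is invertible in $\mathbb{E}$ — equivalently, because we work in the group-like setting, the same one in which $\pi_0(\mathbb{E})$ is a group and the fundamental groupoid $\cAut_{\mathbb{E}}(I)$ is Picard. Composing gives the claim, and it simultaneously yields the alternative characterisation in the Remark, namely that $(-,p_2)\colon\cA\times\cAut_{\mathbb{E}}(X)\to\cAut_{\mathbb{E}}(X)\times\cAut_{\mathbb{E}}(X)$ is an equivalence.
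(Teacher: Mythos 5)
Your proposal is correct and follows the same route as the paper: the action is defined by composing with $\eta_X(f)$ (the paper uses $(h,f)\mapsto f\circ\eta_X(h)$ and notes the other order gives the right action), and the torsor axioms are then checked. The paper in fact stops after exhibiting the action, so your verification of the pentagon via the coherence of the $\rho_{-,-}$ and of clauses (2) and (3) via invertibility of $\eta_X(f)$ and the equivalence $\eta_X\colon\cA\to\cAut_{\mathbb{E}}(X)$ in the group-like setting supplies exactly the details the paper leaves implicit.
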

\begin{proof}
  We will show that $\cA = \cAut_{\mathbb{E}}(I)$ acts on $\cAut_{\mathbb{E}}(X)$ satisfying the required conditions. Consider, $+\colon \cA \times \cAut_{\mathbb{E}}(X) \to \cAut_{\mathbb{E}}(X)$ such that $(h,f) \mapsto f\circ \eta_{X}(h). $ We will call this a left action. One can similarly define a right action by post-composing with $\eta_{X}(h)$ instead.
\end{proof}
\begin{lemma}
    Since $\cA$ is a Picard groupoid, left and right actions by $\cA$ are equivalent. 
\end{lemma}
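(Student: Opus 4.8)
The plan is to exhibit the canonical equivalence between the left and right $\cA$-torsor structures carried by $\cAut_{\mathbb{E}}(X)$, with the symmetry of $\cA$ doing the work. Recall from the Remark in Section~\ref{sec:tors-picard-group} that, over any group-like groupoid, a left torsor determines a right one and conversely; what the Picard (symmetric) hypothesis on $\cA$ adds is that the two structures on the \emph{same} underlying groupoid $\cAut_{\mathbb{E}}(X)$ — the left action $(h,f)\mapsto f\circ\eta_{X}(h)$ and the right action $(h,f)\mapsto\eta_{X}(h)\circ f$ of the preceding Lemma — correspond to one another. So it suffices to produce an equivalence of $\cA$-torsors whose underlying functor is the identity of $\cAut_{\mathbb{E}}(X)$ and which intertwines the two actions.

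First I would build the comparison isomorphism. For $h\in\cA$ and $f\in\cAut_{\mathbb{E}}(X)$, the $2$-morphism $\mu_{f}$ of the preceding Lemma gives $\mu_{f}(h)\colon f\circ\eta_{X}(h)\circ f^{\bullet}\Rightarrow\eta_{X}(h)$; whiskering appropriately by $f$ and cancelling $f^{\bullet}\circ f\simeq\mathrm{id}_{X}$ (legitimate since $f$ is invertible in $\mathbb{E}$) yields a natural isomorphism $\tau_{h,f}\colon f\circ\eta_{X}(h)\xrightarrow{\ \sim\ }\eta_{X}(h)\circ f$. Conceptually $\tau$ is the symmetry isomorphism of $\cA$ transported onto $\cAut_{\mathbb{E}}(X)$ through $\eta_{X}$ — this is exactly the step where the Picard, rather than merely group-like, hypothesis enters, mirroring the proof that $\Pi_{1}(\mathbb{E})$ is Picard.

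It then remains to check the coherence making $(\mathrm{id},\tau)$ a morphism of $\cA$-torsors, and I expect this to be the only real work. Naturality of $\tau_{h,f}$ in $h$ is item (1) of the preceding Lemma (naturality of $\mu_{f}$); naturality in $f$, together with compatibility with composition of automorphisms of $X$, follows from items (2), (3) and (3a), in particular from the cocycle identity $\rho_{f,g}\cdot\rho_{gf,h}=\rho_{g,h}\cdot\rho_{f,hg}$. The substantive point is the associativity (hexagon) compatibility: the pasting of $\tau$ with the torsor associator $a_{h,k,f}$ on the left-action side must match the pasting of $a_{h,k,f}$ with $\tau$ on the right-action side. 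Using the identity $\eta_{XY}=X\eta_{Y}$ and the $\zeta$-coherence recorded at the end of the preceding Lemma, this should reduce to the coherence (hexagon) axioms for the symmetry of $\cA$, which hold by hypothesis. Once that diagram chase is in place, $(\mathrm{id},\tau)$ is automatically an equivalence of $\cA$-torsors, its underlying functor being the identity of $\cAut_{\mathbb{E}}(X)$.
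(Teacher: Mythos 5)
Your proposal is correct and matches the paper's own argument: the paper's entire proof is the single diagram that whiskers $\mu_{f}\colon f\circ\eta_{X}(h)\circ f^{\bullet}\Rightarrow\eta_{X}(h)$ by $f$ and cancels $f^{\bullet}\circ f\simeq\mathrm{id}_{X}$ to identify $f\circ\eta_{X}(h)$ with $\eta_{X}(h)\circ f$, which is exactly your construction of $\tau_{h,f}$. The additional coherence checks you outline are left implicit in the paper (``commutes due to the coherence of the bi-category structure''), so your write-up is, if anything, more complete.
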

\begin{proof}
The following diagram commutes due to the coherence of the bi-category structure and gives us the required result.
\begin{equation*}
\begin{tikzcd}
X \arrow[r, "f"] \arrow[rrr, "\eta_{X}(h)"', bend right=49] & X \arrow[r, "\eta_{X}(h)"]           & X \arrow[r, "f^{\bullet}"] & X \arrow[r, "f"] & X \\
& {} \arrow[ru, "\mu_{f}", Rightarrow] &                            &                  &  
\end{tikzcd}
\end{equation*}
\end{proof}

\subsubsection{Cohomology}\label{Cohomology of Extensions2}
Let $\mathbb{E}$ be a monoidal bi-category, then there exists a categorical extension as follows:
    
\begin{equation*} 0 \to \Sigma\cA \to \mathbb{E} \xrightarrow{p} \pi_{0}(\mathbb{E}) \to 0.\end{equation*} 
We can analyze this extension using a technique similar to the one we used for monoidal categories, i.e., we analyze the sections of map $p$. Let $s\colon  \pi_{0}(\mathbb{E}) \to \mathbb{E}$ be a section of $p\colon \mathbb{E}\to \pi_{0}(\mathbb{E})$.
The due to the monoidal product of $\mathbb{E}$, for each $x,y \in \pi_{0}(\mathbb{E}), \exists\ c_{x,y}\colon s(x)\otimes s(y)\xrightarrow{\cong} s(xy)$ along with
\begin{equation*}
\begin{tikzcd}
(s(x)\otimes s(y))\otimes s(z) \arrow[rr, "{c_{x,y}\otimes id_{s(z)}}"] \arrow[d, "{\alpha_{s(x),s(y),s(z)}}"'] &  & s(xy)\otimes s(z) \arrow[r, "{c_{xy,z}}"]  & s(xyz) \arrow[d, "{f_{x,y,z}}", blue] 
 \\
s(x)\otimes (s(y)\otimes s(z)) \arrow[rr, "{id_{s(x)}\otimes c_{y,z}}"']                                        &  & s(x)\otimes s(yz) \arrow[r, "{c_{x,yz}}"'] & s(xyz)                                                                       
\end{tikzcd}
\end{equation*}
 and the pentagon:
\begin{equation*}
\begin{tikzcd}
s(xyzt) \arrow[rr, "{f_{xy,z,t}}",blue] \arrow[d, "{f_{x,y,z}\otimes id_{t}}"',blue] &                                                  & s(xyzt) \arrow[d, "{f_{x,y,zt}}",blue] \arrow[lldd, "{\theta_{x,y,z,t}}", shorten <= 0em, shorten >= 1.5cm, Rightarrow] \\
s(xyzt) \arrow[rd, "{f_{x,yz,t}}"',blue]                                        &                                                  & s(xyzt)                                                                          \\
{}                                                                         & s(xyzt) \arrow[ru, "{id_{x}\otimes f_{y,z,t}}"',blue] &                                                                                 
\end{tikzcd}
\end{equation*}
Such that this satisfies the associahedron $K_{5}$.
\begin{equation*}
\adjustbox{scale = {0.57}{1}}{%
\begin{tikzcd}
& x((y(zt))w) \arrow[rrrrr,orange]                         &    &                                      &                                                 &             & x(y((zt)w)) \arrow[lddd,orange]                                                           &                                                              &                           &                                                   &                                        \\
x(((yz)t)w) \arrow[ru] \arrow[rddd] \arrow[rrrrd, Rightarrow, violet] &                                                   &    &                                      &                                                 &             & {}                                                                                 &                                                              &                           & {} \arrow[lll, Rightarrow, brown]                        & (xy)((zt)w) \arrow[llllu] \arrow[lddd] \\
&                                                   & {} &                                      & (x(y(zt))w \arrow[llluu,orange]                        &             &                                                                                    & ((xy)(zt))w \arrow[rrru] \arrow[lll] \arrow[lld, Rightarrow, cyan] & {}                        &                                                   &                                        \\
&                                                   &    & (x((yz)t))w \arrow[ru,orange] \arrow[llluu] &                                                 & \textcolor{teal}{x(y(z(tw)))} &                                                                                    &                                                              &                           &                                                   &                                        \\
& x((yz)(tw)) \arrow[rrrru] \arrow[ruu, Rightarrow, shorten <= 0em, shorten >= 0.5cm, cyan] &    & {}                                   & ((x(yz))t)w \arrow[d] \arrow[lu,orange]                &             & \textcolor{purple}{(((xy)z)t)w} \arrow[d,blue] \arrow[ll,orange] \arrow[ruu]                                       & {}                                                           & {} \arrow[ll, Rightarrow] & (xy)(z(tw)) \arrow[llllu,blue] \arrow[luu, Rightarrow, red] &                                        \\
&                                                   &    &                                      & (x(yz))(tw) \arrow[lllu] \arrow[lu, Rightarrow, brown] &             & ((xy)z)(tw) \arrow[ll] \arrow[rrru,blue] \arrow[ru, Rightarrow, shorten <= 0em, shorten >= 0.5cm] \arrow[llu, Rightarrow, red] &                                                              &                           &                                                   &                                       
\end{tikzcd}}
\end{equation*}
\begin{center}
Associahedron $K_5$ \cite{Loday}
\end{center}

\begin{theorem}\label{thmExt}
Let $\mathbb{E}$ be a monoidal bi-category. With the definition of $f$ and $\theta$ as above, $(f,\theta) \in H^{3}\left(\pi_{0}(\mathbb{E}),\cA\text{ut}_{\mathbb{E}}(I)\right)$.
\end{theorem}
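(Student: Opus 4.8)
The plan is to exhibit $(f,\theta)$ as an object of the category $\cL^{3}$ of $3$-cocycles of the complex of Picard groupoids $\cA^{\textbf{B}G}$ computing $H^{\bullet}(G,\cA)$ in the sense of Definition~\ref{defCohomoInPic} — here $G=\pi_{0}(\mathbb{E})$ and $\cA=\cAut_{\mathbb{E}}(I)$ — and then to take its class in $\mathscr{H}^{3}$ and its image in $\pi_{0}$. The first step is to read $f$ and $\theta$ off as data of this complex. For the chosen section $s$ and the equivalences $c_{x,y}$, each $f_{x,y,z}$ is by construction an auto-equivalence of $s(xyz)$, the ``associator defect'' $1$-cell; since $\cAut_{\mathbb{E}}(s(xyz))$ is an $\cA$-torsor and $\eta_{s(xyz)}\colon\cA\to\cAut_{\mathbb{E}}(s(xyz))$ is an equivalence by the preceding Lemmas, $f_{x,y,z}$ corresponds canonically to an object of $\cA$; as $G^{3}$ is discrete, the family $f=(f_{x,y,z})$ is an object of $\cA^{G^{3}}$. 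Likewise each $\theta_{x,y,z,t}$ is a $2$-cell of $\mathbb{E}$, hence, transported along the same equivalences, a morphism of $\cA$; checking that it has the correct source and target is the content of the next step.

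Second, I would show that $(f,\theta)\in\cP^{3}$, i.e.\ that $\theta$ is an isomorphism $\partial_{3}(f)\xrightarrow{\sim}I$ in $\cA^{G^{4}}$. Unwinding the faces of $\textbf{B}G$, the object $\partial_{3}(f)(x,y,z,t)$ is the alternating composite — composition in $\cA$ playing the role of addition — of $f(y,z,t)$, $f(xy,z,t)$, $f(x,yz,t)$, $f(x,y,zt)$, $f(x,y,z)$. The three ``inner'' terms are precisely the $1$-cells $f_{xy,z,t}$, $f_{x,yz,t}$, $f_{x,y,zt}$ occurring in the pentagon that defines $\theta_{x,y,z,t}$, while the two ``outer'' terms $d_{0}^{*}f=f(y,z,t)$ and $d_{4}^{*}f=f(x,y,z)$ correspond there, respectively, to the whiskered $1$-cells $\id_{s(x)}\otimes f_{y,z,t}$ and $f_{x,y,z}\otimes\id_{s(t)}$ (conjugated by the appropriate $c$'s). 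Identifying these whiskerings with objects of $\cA$ on the nose is exactly what the identity $\eta_{XY}=X\eta_{Y}$ of the coherence Lemma provides, together with its left-unitor analogue and the equivalence of the left and right $\cA$-torsor structures established above; the symmetry of $\cA$ then lets one reorder the composite into the order prescribed by $\partial_{3}$. With all of this in place, the pentagon diagram is literally a $2$-cell from the composite computing $\partial_{3}(f)(x,y,z,t)$ to the identity, that is, the component $\theta_{x,y,z,t}$ of the claimed isomorphism; hence $(f,\theta)\in\cP^{3}$.

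Third comes the cocycle condition $(f,\theta)\in\cL^{3}$: the composite $I\xrightarrow{\chi_{3}^{-1}}\delta\delta(f)\xrightarrow{\delta(\theta)}\delta(I)\to I$ in $\cA^{G^{5}}$ must be the identity. Here $\delta(\theta)=\partial_{4}(\theta)$ is the alternating composite of the six faces $d_{i}^{*}\theta$, $i=0,\dots,5$, and, after the same whiskering bookkeeping as above — now also invoking the $2$-cells $\mu$ and $\rho$ of the Lemma for the faces that merge a product into an outermost slot, and the relation $\rho_{f,g}\cdot\rho_{gf,h}=\rho_{g,h}\cdot\rho_{f,hg}$ — these six faces are precisely the six pentagonal $2$-cells that occur in the two fillings of the associahedron $K_{5}$. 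The $K_{5}$ coherence satisfied by $\theta$, asserting that those two fillings agree, is then exactly the assertion that the composite above collapses to $\id_{I}$, the datum $\chi_{3}$ (the ``wave'' of the complex) furnishing the canonical trivialization of $\partial_{4}\partial_{3}f$ against which the collapse is measured. Therefore $(f,\theta)\in\cL^{3}$ and $[(f,\theta)]\in H^{3}(G,\cA)$.

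The routine ingredients are the low-level naturality of the $2$-cells $\zeta$, $\mu$, $\rho$ and the fact — already proved above — that $\cA=\cAut_{\mathbb{E}}(I)$ is a genuine Picard groupoid, so that the coefficient system and the complex make sense. The main obstacle is the bookkeeping internal to the second and third steps: tracking orientations, inverses, and the whiskering corrections on the two extreme tensor factors carefully enough that the pentagon and the associahedron $K_{5}$ match \emph{on the nose} with the definitions of $\partial_{3}$, $\partial_{4}$, the connecting data $\chi$, and the $\cL^{3}$-equation. Finally, although the statement as phrased only asserts that $(f,\theta)$ is a cocycle, one should record that replacing $s$ by another section, or the $c_{x,y}$ by other equivalences, alters $(f,\theta)$ by an explicit element of $\cB^{3}$ assembled from the comparison $1$-cells $s(x)\to s'(x)$ viewed as an object of $\cA^{G^{2}}$; thus the class $[(f,\theta)]\in H^{3}(\pi_{0}(\mathbb{E}),\cAut_{\mathbb{E}}(I))$ is well defined and intrinsic to $\mathbb{E}$.
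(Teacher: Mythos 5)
Your proposal is correct and follows essentially the same route as the paper's own (much terser) proof: read $f$ as an object of $\cA^{G^{3}}$ and $\theta$ as a morphism in $\cA^{G^{4}}$ trivializing $\delta(f)$ via the pentagon, so that $(f,\theta)$ is a $3$-pseudococycle, and then obtain the $\cL^{3}$ cocycle condition from the commutativity of the associahedron $K_{5}$. Your additional bookkeeping (matching the whiskered outer faces using $\eta_{XY}=X\eta_{Y}$, and the observation that changing the section $s$ alters $(f,\theta)$ by an element of $\cB^{3}$) is a sound elaboration of details the paper leaves implicit rather than a different argument.
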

\begin{proof}
Using definition \ref{defCohomoInPic} and the bar constructions for $\pi_{0}(\mathbb{E})$, notice that  $f \in \cA^{G\times G\times G}$ and $\theta \in \text{Hom}_{\cA^{G\times G\times G\times G}}.$ This makes $(f,\theta) \in \mathscr{P}^{n}\left(\cA^{(\pi_{0}\mathbb{E})_{\bullet}}\right)$, i.e., $(f,\theta)$ is a 3-pseudococycle. Finally, it becomes a 3-cocycle, i.e., it belongs to $\cL ^{3}\left(\cA^{(\pi_{0}\mathbb{E})_{\bullet}}\right)$, by satisfying the same condition as the associahedron's commutativity. 
\begin{equation*}
\theta_{xy,z,t,w}\cdot \theta_{x,y,zt,w}\cdot \theta_{x,y,z,t} = \theta_{x,y,z,tw}\cdot \theta_{x,yz,t,w}\cdot \theta_{y,z,t,w}.
\end{equation*}
\end{proof}
\subsubsection{Long Exact Sequence of Cohomology Groups}
If $\cA$ is a Picard groupoid, there exists a short exact sequence of Picard groupoids:

\begin{equation*}
0\to \Sigma (\pi_{1}(\cA))\to \cA\to \pi_{0}(\cA)\to 0
\end{equation*}
For a group $G$, taking cochain complexes $C^{*}(G,-)$ gives a shirt exact sequence of chain complexes, and then we can consider the standard long exact sequence from it. The long exact sequence looks like the following, as defined in \cite{Ulbrich}.
\begin{equation*}
\begin{tikzcd}
\cdots \arrow[r] & { H^{n+1}(G,\pi_{1}(\cA))} \arrow[r] & { H^{n}(G,\cA)} \arrow[r] & { H^{n}(G,\pi_{0}(\cA))} \arrow[r] & { H^{n+2}(G,\pi_{1}(\cA))} \arrow[r] & \cdots     
\end{tikzcd}
\end{equation*}    
Here the extra shift of dimension comes from using the suspension isomorphism: $H^{n}(G_1,\Sigma G_2) \cong H^{n+1}(G_{1}, G_{2}).$

For a given monoidal bi-category $\mathbb{E}$, we get the following long exact sequence of cohomology groups:
\begin{equation*}
\begin{tikzcd}
\cdots \arrow[r] & { H^{n+1}(\pi_{0}\mathbb{E},\pi_{1}(\cA))} \arrow[r] & { H^{n}(\pi_{0}\mathbb{E},\cA)} \arrow[r] & { H^{n}(\pi_{0}\mathbb{E},\pi_{0}(\cA))} \arrow[r] & { H^{n+2}(\pi_{0}\mathbb{E},\pi_{1}(\cA))} \arrow[r] & \cdots \\
& {\text{For } n = 3:}                & {[f,\theta]} \arrow[r, maps to]  & {[f]}                                     &                                             &       
\end{tikzcd}
\end{equation*}
\section{Biextensions}\label{biExt}\label{biExtMonCat}
We use extensions to analyze the associativity of a monoidal (bi-)category, but to analyze the braiding and symmetry, we need a structure that takes two objects into account. Breen used this idea in \cite{Breen} to analyze the symmetry of monoidal categories using biextensions. With this, we also recover the conditions for associativity when we restrict ourselves to one group. Let us first recall the definition of biextensions in the case of groups, then we quickly recall Breen's work on monoidal categories that leads to our work on monoidal bi-categories.
\begin{definition}(See \cite{Mumford},\cite{SGA7})
Let $A$ be an abelian group, $G, H$ be groups. A biextension $E$, of $G\times H$ by $A$ is an $A$-torsor over $G\times H$, endowed with a pair of partial composition laws whose restriction to appropriate fibers may be depicted as a morphism of $A$-torsors.
\begin{equation*}
+_{1}\colon  E_{x,y}\wedge^{A}E_{x',y}\to E_{xx',y}
\end{equation*}
\begin{equation*}
+_{2}\colon  E_{x,y}\wedge^{A} E_{x,y'} \to E_{x,yy'}.
\end{equation*}
Here $E_{x,y}$ denotes the fiber above a point $(x,y) \in G\times H$. These two composition laws are required to be 
\begin{enumerate}
    \item associative
    \item compatible with each other, i.e.
\begin{equation*}
(X_{x,y}+_{1}X_{x',y})+_{2}(X_{x,y'}+_{1}X_{x',y'}) = (X_{x,y}+_{1}X_{x,y'})+_{2}(X_{x',y}+_{1}X_{x',y'}).
\end{equation*}
\end{enumerate}
\end{definition}
\begin{remark}
Breen assumes $G$ and $H$ to be abelian groups and $+_1$, $+_2$ to be commutative laws in \cite{Breen}. 
\end{remark}

A biextension $E$ can be trivialized, i.e., it can be defined by $A\times G\times H$ using two partial group laws that satisfy the required conditions. These are cocycle conditions in appropriate cohomology groups. See \cite{Mumford} for the details.

\subsection{Cocycles related to a Biextension}
For chosen elements $X_{x,y} \in E_{x,y}$, we have 
\begin{equation*}
X_{x,y} +_{1} X_{x',y} = A_{x,x';y} + X_{xx',y} 
\end{equation*}
\begin{equation*}
X_{x,y} +_{2} X_{x,y'} = B_{x;y,y'} + X_{x,yy'} 
\end{equation*}
Here $A_{x,x';y}$ and $B_{x;y,y'}$ are elements in $\pi_{1}(\mathscr{E})$, and $+$ is the action of $\pi_{1}(\mathscr{E})$ on $E_{-,-}$. Along with this, $A_{-,-;-}$ and $B_{-;-,-}$ satisfy cocycle conditions in $H^{2}(\pi_{0}\mathscr{E}\times \pi_{0}\mathscr{E}, \pi_{1}\mathscr{E})$ as below. 
\begin{enumerate}
\item Due to associativity of $+_{1}$ and $+_{2}$.
\begin{equation*}
    A_{x,x';y}+A_{xx',x'';y} = A_{x',x'';y} + A_{x,x'x'';y}
\end{equation*}
\begin{equation*}
    B_{x;y,y'} + B_{x;yy',y''} = B_{x;y',y''} + B_{x;y,y'y''}    
\end{equation*}
\item Due to compatibility of $+_1$ and $+_2$.
    \begin{equation*}
A_{x,x';y} + A_{x,x';y'} + B_{xx';y,y'} = B_{x;y,y'} + B_{x';y,y'} + A_{x,x';yy'}.
\end{equation*}
\end{enumerate}
After seeing the cocycles for any general biextension, we focus our attention to the a specific biextension that we can construct from a given monoidal category. 
\subsection{Biextensions and Monoidal Categories}
For a monoidal category $\mathscr{E}$, $\pi_{1}(\mathscr{E})$ can be proved to be an abelian group using the Eckmann-Hilton argument. Moreover, for any $X,Y \in \text{Ob}(\mathscr{E})$, we can show that $\text{Hom}_{\mathscr{E}}(X,Y)$ is a $\pi_{1}(\mathscr{E})$-torsor. We can consider the action $+\colon  \pi_{1}(\mathscr{E})\times \text{Hom}_{\mathscr{E}}(X,Y) \to \text{Hom}_{\mathscr{E}}(X,Y)$ as $(h,f) \mapsto f \circ \eta_{X}(h)$. Similarly, one can also define a right action.

So we consider a biextension of $\pi_{0}(\mathscr{E})\times \pi_{0}(\mathscr{E})$ by $\pi_{1}(\mathscr{E})$, endowed with the following pair of partial composition laws as shown in \cite{Breen}.
\begin{equation*}
+_{1}\colon  {E}_{x,y}\wedge^{\pi_{1}\mathscr{E}}{E}_{x',y}\to {E}_{xx',y} 
\end{equation*}
\begin{equation*}
+_{2}\colon  {E}_{x,y}\wedge^{\pi_{1}\mathscr{E}} {E}_{x,y'} \to {E}_{x,yy'}.
\end{equation*}
Here ${E}_{x,y} = \text{Hom}_{\mathscr{E}}(YX, XY)$ denotes a fiber above $(x,y)\in \pi_{0}(\mathscr{E})\times \pi_{0}(\mathscr{E})$ with chosen $X,Y \in \text{Ob}(\mathscr{E})$ that represent the classes $x,y$, respectively. The partial composition laws are set maps defined as follows. For $f \in {E}_{x,y}$, $f' \in {E}_{x',y}$, and $g' \in {E}_{x,y'}$, we define 
\begin{equation*}
f +_{1} f' \colon =  X\cdot f'\circ f\cdot X' = YXX'\xrightarrow{f\cdot X'} XYX' \xrightarrow{Xf'} XX'Y,
\end{equation*}
\begin{equation*}
f +_{2} g' \colon =  f\cdot Y'\circ Y\cdot g' = YY'X\xrightarrow{Y\cdot g'} YXY' \xrightarrow{f\cdot Y'} XYY'.
\end{equation*}
It can be easily checked that these are well-defined under the action of $\pi_{1}(\mathscr{E})$. These partial composition laws must satisfy conditions like associativity, commutativity, and compatibility with each other, and that gives us corresponding cocycles as shown in \cite[\S 2]{Breen}. 

We now upgrade and define categorical biextension of groups $G$ and $H$ by a Picard groupoid $\cA$.
\section{Categorical Biextensions}\label{biExtMonbiCat}
\begin{definition}
Let $G, H$ be groups, $\cA$ be a Picard category, then a biextension, $\mathscr{E}$, of $G\times H$ by $\cA$ is an $\cA$-torsor over $G\times H$, endowed with a pair of functors of partial composition laws whose restriction to appropriate fibers may be depicted as a morphism of $\cA$-torsors. 
    \begin{equation*}+_{1}\colon  \mathscr{E}_{x,y}\wedge^{\cA}\mathscr{E}_{x',y}\to \mathscr{E}_{xx',y} \end{equation*}
     \begin{equation*} +_{2}\colon  \mathscr{E}_{x,y}\wedge^{\cA} \mathscr{E}_{x,y'} \to \mathscr{E}_{x,yy'}.\end{equation*}
     Here $\mathscr{E}_{x,y}$ denotes the fiber above a point $(x,y) \in G\times H$. These two composition laws are required to be 
    \begin{enumerate}
        \item associative up to a coherent isomorphism
        \item compatible with each other up to a coherent isomorphism
        \begin{equation*}ca\colon  (X_{x,y}+_{1}X_{x',y})+_{2}(X_{x,y'}+_{1}X_{x',y'}) \xrightarrow{\cong} (X_{x,y}+_{1}X_{x,y'})+_{2}(X_{x',y}+_{1}X_{x',y'}).\end{equation*}
    \end{enumerate} 
When $G$ and $H$ are abelian, we can talk about biextensions with full symmetry, and in that case, we let the composition laws be commutative up to a coherent isomorphism.
\end{definition}
The name $ca$ comes from commutativity and associativity as the map can be decomposed in terms of the two. 
\subsection{Cocycles related to a Biextension}\label{CocycleBiext}
For chosen objects $X_{x,y}$ of the category $\mathscr{E}_{x,y}$ we can again write 
\begin{equation*}f_{x,x';y}\colon  X_{x,y} +_{1} X_{x',y} \to A_{x,x';y} + X_{xx',y} \end{equation*}
\begin{equation*}g_{x;y,y'}\colon  X_{x,y} +_{2} X_{x,y'} \to B_{x;y,y'} + X_{x,yy'} \end{equation*}
Here $A_{x,x';y}$ and $B_{x;y,y'}$ are objects of the groupoid $\cA$, and $+$ is the action of $\cA$ on $\mathscr{E}_{-,-}$. $A_{-,-;-}$ and $B_{-;-,-}$ satisfy cocycle conditions in $H^{3}(G\times H, \cA)$ as below. 
\begin{enumerate}
    \item Associativity of $+_1$ and $+_2$. 

Using similar calculations as in theorem \ref{thmExt}, we get the cocycles to be $(f_{x,x',x'';y},\theta_{x,x',x'',x''';y}),\\ (f_{x;y,y',y''},\theta_{x;y,y',y'',y'''})$ in $H^{3}(G\times H),\cA)).$ Here $y$ and $x$ act as spectators in the first and the second cocycle respectively.  
    \item Compatibility of $+_1$ and $+_2$.

\begin{equation*}
\begin{tikzcd}
    & {\mathscr{E}_{xx',yy'}} &                                                                                                               \\
{\mathscr{E}_{xx',y}\wedge \mathscr{E}_{xx',y'}} \arrow[ru] \arrow[rrd, "{\chi_{x,x';y,y'}}", Rightarrow, yshift = 10, xshift = 15]                &                         & {\mathscr{E}_{x,yy'}\wedge \mathscr{E}_{x',yy'}} \arrow[lu]                                                   \\
{(\mathscr{E}_{x,y}\wedge \mathscr{E}_{x',y})\wedge (\mathscr{E}_{x,y'}\wedge \mathscr{E}_{x',y'})} \arrow[rr,"{ca}"'] \arrow[u] &                         & {(\mathscr{E}_{x,y}\wedge \mathscr{E}_{x,y'})\wedge (\mathscr{E}_{x'y,}\wedge \mathscr{E}_{x',y'})} \arrow[u]
\end{tikzcd}
\end{equation*}
In other words, $\chi_{x,x';y,y'}\colon  A_{x,x';y} + A_{x,x';y'} + B_{xx';y,y'} \xrightarrow{} B_{x;y,y'} + B_{x';y,y'} + A_{x,x';yy'} $ is a morphism in $\cA$ that must satisfy the compatibility conditions with 5 entries. 
\begin{enumerate}
    \item (3,2)-coherence axiom

See diagram \ref{(3,2)-diagram}.Assuming other structural morphisms to be trivial, this boils down to the following condition:
\begin{equation*}\chi_{x,x'x'';y,y'}\cdot \chi_{x',x'';y,y'} = \chi_{xx',x'';y,y'} \cdot \chi_{x,x';y,y'}. \end{equation*}
Note that the diagram \ref{(3,2)-diagram} is a polytope and its base commutes as it is made up of structural 2-morphisms of $\cA$.
        \item (2,3)-coherence axiom
        
Similarly, this case boils down to the following condition:
\begin{equation*}\chi_{x,x';y',y''}\cdot \chi_{x,x';y,y'y''} = \chi_{x,x';y,y'}\cdot \chi_{x,x';yy',y''}. \end{equation*}
    \end{enumerate}
\end{enumerate}
\begin{remark}
    If one goes through a similar analysis for the (3,3)-coherence axiom, they will get a diagram in 4-dimensional space whose base, in the 3-dimensions can be described using (2,3) and (3,2)-coherence axioms as above. The base of (3,3)-coherence axiom commutes because of the following diagrams.
\begin{equation*}
\adjustbox{scale = 0.85}{%
\begin{tikzcd}
D_{1}                       &                             & C_{1} \arrow[rr] \arrow[ll]                   &                             & D_{2}                       &   & D_1 \arrow[rrdd]          &         & C_1 \arrow[ll] \arrow[rr] &         & D_2 \arrow[lldd]          \\
& B_{1} \arrow[ld] \arrow[ru] & {(3,2)}                                       & B_{2} \arrow[rd] \arrow[lu] &                             &   &                           &         &                           &         &                           \\
C_{6} \arrow[dd] \arrow[uu] &                             & A \arrow[ru] \arrow[rd] \arrow[ld] \arrow[lu] &                             & C_{5} \arrow[dd] \arrow[uu] & = & C_6 \arrow[uu] \arrow[dd] & {(2,3)} & E                         & {(2,3)} & C_5 \arrow[uu] \arrow[dd] \\
& B_{3} \arrow[rd] \arrow[lu] & {(3,2)}                                       & B_{4} \arrow[ru] \arrow[ld] &                             &   &                           &         &                           &         &                           \\
D_{3}                       &                             & C_{2} \arrow[ll] \arrow[rr]                   &                             & D_{4}                       &   & D_3 \arrow[rruu]          &         & C_2 \arrow[rr] \arrow[ll] &         & D_4 \arrow[lluu]         
\end{tikzcd}} 
\end{equation*}

Details of the nodes of these diagrams can be found in diagrams \ref{nodes(3,3)}. We do not need the (3,3)-coherence axiom for this analysis as (3,2) and (2,3)-coherence axioms are sufficient.
\end{remark}

    \subsection{Biextension and Monoidal bi-Category}\label{biextMonbiCat}
        \begin{lemma}
    \begin{enumerate}
    \item $\cAut_{\mathbb{E}}(X)$ is a Picard groupoid.
    \item For each $X \in \text{Ob}(\mathbb{E})$, $\eta_{X}$ is a monoidal functor of Picard groupoids.
    \end{enumerate}
\end{lemma}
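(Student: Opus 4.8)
The plan is to prove both parts at once, by showing that $\eta_X$ exhibits $\cAut_{\mathbb{E}}(X)$ as a monoidal category monoidally equivalent to the Picard groupoid $\cA=\cAut_{\mathbb{E}}(I)$, and then transporting the symmetric structure along that equivalence. To begin, $\cAut_{\mathbb{E}}(X)$ is a group-like groupoid: its objects are the $1$-automorphisms $u\colon X\to X$, its morphisms the invertible $2$-cells between them, its monoidal product is horizontal composition $u\cdot v:=v\circ u$ with unit $\id_X$, its associativity and unit constraints are inherited from the coherence data of the bicategory $\mathbb{E}$, and every object is invertible because a $1$-automorphism of $X$ admits a quasi-inverse. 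This much is routine from the bicategory axioms, and it already generalizes the case $X=I$ treated above.

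Next I would endow $\eta_X$ with a monoidal structure. For $f,f'\colon I\to I$, paste the defining $2$-cells $\zeta_f$ and $\zeta_{f'}$ along the common edge $r_X$ and splice in the pseudofunctoriality constraint $(Xf')\circ(Xf)\Rightarrow X(f'\circ f)$ of the tensor pseudofunctor $X\otimes-$; since $f'\circ f$ is exactly the product $f\cdot f'$ in $\cA$ and $r_X$ is an equivalence, transporting the resulting $2$-cell across $r_X$ yields an isomorphism $\lambda_{f,f'}\colon \eta_X(f)\cdot\eta_X(f')\xrightarrow{\ \cong\ }\eta_X(f\cdot f')$. Verifying that $\lambda$ satisfies the monoidal-functor coherence — compatibility with the associators and with the units — amounts to one commuting pasting diagram in $\mathbb{E}$; I would assemble it from the coherence relations for $\eta$ recorded in the $\eta$-coherence Lemma above, in particular the naturality of $\zeta$ in $f$, the relation $\eta_{XY}=X\eta_Y$, and the associativity relation $\rho_{f,g}\cdot\rho_{gf,h}=\rho_{g,h}\cdot\rho_{f,hg}$, together with the coherence theorem for bicategories. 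This proves that $(\eta_X,\lambda)$ is a monoidal functor of group-like groupoids.

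I would then observe that $\eta_X$ is an equivalence of categories. By the Lemma that $\cAut_{\mathbb{E}}(X)$ is an $\cA$-torsor, the action $(h,u)\mapsto u\circ\eta_X(h)$ makes $(-,p_2)\colon\cA\times\cAut_{\mathbb{E}}(X)\to\cAut_{\mathbb{E}}(X)\times\cAut_{\mathbb{E}}(X)$ an equivalence; restricting to the second coordinate $\id_X$, which this functor preserves, identifies the restriction with $h\mapsto\id_X\circ\eta_X(h)=\eta_X(h)$, so $\eta_X$ is essentially surjective and fully faithful. Hence $\eta_X$ is a monoidal equivalence. Now transport the braiding of the Picard groupoid $\cA$ across $\eta_X$: choosing a monoidal quasi-inverse and conjugating $c^{\cA}$ by the unit and counit produces a braiding on $\cAut_{\mathbb{E}}(X)$ whose hexagon axioms and symmetry relation $c\circ c=\id$ are inherited from those of $\cA$. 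This makes $\cAut_{\mathbb{E}}(X)$ a Picard groupoid, which is part (1); and by the very construction of the transported braiding, $(\eta_X,\lambda)$ is automatically compatible with the symmetric structures on its source and target, which is part (2).

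The main obstacle is the middle step: constructing $\lambda$ and checking the monoidal-functor coherence for $\eta_X$ forces one to organize several pasting diagrams in $\mathbb{E}$ — unitor coherence for $r_X$, pseudofunctoriality of $X\otimes-$ on $2$-cells, and the $\zeta$/$\rho$ relations — into a single commuting polytope. Once $\eta_X$ is known to be a monoidal equivalence, the remaining assertions, that $\cAut_{\mathbb{E}}(X)$ is Picard and that $\eta_X$ respects the symmetry, are formal consequences of transport of structure along a monoidal equivalence, the only point to watch being independence of the construction from the choice of quasi-inverse, which holds up to coherent isomorphism.
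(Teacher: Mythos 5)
Your argument is correct and follows essentially the same route as the paper: the paper likewise deduces part (1) by transporting the Picard structure across the equivalence $\cA \simeq \cAut_{\mathbb{E}}(X)$ and establishes part (2) from the coherence of the right unitor $r_X$, which is exactly the pasting-diagram content of your construction of $\lambda_{f,f'}$. Your version is simply more explicit about the logical order (monoidality of $\eta_X$ must come before the transport of the symmetry), and you could shorten the equivalence step by citing torsor axiom (3) directly with $v=\id_X$.
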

\begin{proof}
    Since $\cA$ and $\cAut_{\mathbb{E}}(X)$ are equivalent categories, $\cAut_{\mathbb{E}}(X)$ is also a Picard groupoid. Moreover, the following coherence condition for the right unitor $r_{X}$ makes $\eta_{X}$ a monoidal functor:

\begin{equation*}
\begin{tikzcd}
& {}                                                                                                       &                        &  &                                                                                                                       & {} \arrow[d, Rightarrow]                                       &                      \\
X \arrow[r, "\eta_{X}(h)"] \arrow[rd, "\zeta_{h}", Rightarrow] \arrow[rr, "id_{X}", bend left=49] & X \arrow[r, "\eta_{X}(h^{\bullet})"] \arrow[rd, "\zeta_{h^{\bullet}}", Rightarrow] \arrow[u, Rightarrow] & X                      &  & X \arrow[r, "\eta_{X}(h^{\bullet})"] \arrow[rr, "id_{X}", bend left=49] \arrow[rd, "\zeta_{h^{\bullet}}", Rightarrow] & X \arrow[r, "\eta_{X}(h)"] \arrow[rd, "\zeta_{h}", Rightarrow] & X \arrow[d, "r_{X}"] \\
XI \arrow[u, "r_{X}"] \arrow[r, "Xh"] \arrow[rr, "Xid_{I}"', bend right=49]                       & XI \arrow[r, "Xh^{\bullet}"] \arrow[u, "r_{X}"] \arrow[d, "X\cdot\varepsilon", Rightarrow]               & XI \arrow[u, "r_{X}"'] &  & XI \arrow[u, "r_{X}"] \arrow[rr, "Xid_{I}"', bend right=49] \arrow[r, "Xh^{\bullet}"]                                 & XI \arrow[u, "r_{X}"] \arrow[r, "Xh"]                          & XI                   \\
& {}                                                                                                       &                        &  &                                                                                                                       & {} \arrow[u, "X\eta", Rightarrow]                              &                     
\end{tikzcd}
\end{equation*}
\end{proof}

The same results can be generalized further to any Hom-sets.
\begin{lemma}
    \begin{enumerate}
        \item Let $\mathbb{E}$ be a monoidal bi-category, then for any $X, Y \in \text{Ob}(\mathbb{E})$, $\mathscr{H}$om$_{\mathbb{E}}(X,Y)$ is an $\cAut_{\mathbb{E}}(I)$-torsor.
        \item Again, since $\cA$ is a Picard groupoid, left and right actions by $\cA$ are equivalent. 
    \end{enumerate}
\end{lemma}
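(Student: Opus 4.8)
The plan is to mirror the argument already used twice in the excerpt for $\cAut_{\mathbb{E}}(X)$ and for $\mathscr{H}\mathrm{om}_{\mathbb{E}}(X,Y)$ in the categorical setting, only now keeping track of the homotopical data that makes the action coherent. For part (1), I would first fix a choice of object in $\mathscr{H}\mathrm{om}_{\mathbb{E}}(X,Y)$; concretely, recall that for $\cAut_{\mathbb{E}}(X)$ we had the left action $(h,\phi)\mapsto \phi\circ\eta_X(h)$, and for a general hom-category the same formula makes sense: define $+\colon \cA\times\mathscr{H}\mathrm{om}_{\mathbb{E}}(X,Y)\to\mathscr{H}\mathrm{om}_{\mathbb{E}}(X,Y)$ by $(h,\phi)\mapsto \phi\circ\eta_X(h)$ (1-cells compose, 2-cells whisker). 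The associativity isomorphism $a_{h,h',\phi}\colon (h+h')+\phi\cong h+(h'+\phi)$ is built from the monoidality $2$-cell of $\eta_X$ (Lemma, part 2 of the previous statement: $\eta_X$ is a monoidal functor of Picard groupoids), exactly as in the $\cAut$ case. The pentagon axiom for this $a$ then reduces to the coherence condition on the monoidal structure of $\eta_X$ together with the associativity coherence of composition of $1$-cells in the bicategory $\mathbb{E}$; this is the one spot where a genuine (if routine) diagram chase is needed.

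Next I would verify the two invertibility conditions in the definition of $\cA$-torsor. For fixed $h\in\cA$, the endofunctor $\phi\mapsto h+\phi=\phi\circ\eta_X(h)$ is an equivalence with quasi-inverse $\phi\mapsto \phi\circ\eta_X(h^{\bullet})$, because $\eta_X(h)$ is an equivalence ($\cA$ is group-like and $\eta_X$ is monoidal, so it sends the inverse of $h$ to a quasi-inverse of $\eta_X(h)$) and post-composition with an equivalence is an equivalence of hom-categories in a bicategory. For fixed $\phi$, the functor $h\mapsto \phi\circ\eta_X(h)$ is an equivalence because $\eta_X\colon\cA\to\cAut_{\mathbb{E}}(X)$ is an equivalence of categories — this was established just above in the excerpt, where it is observed that $\cA$ and $\cAut_{\mathbb{E}}(X)$ are equivalent — and $\phi\circ(-)\colon\cAut_{\mathbb{E}}(X)\to\mathscr{H}\mathrm{om}_{\mathbb{E}}(X,Y)$ is again an equivalence by post-composition with the equivalence $\phi$. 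Composing these two equivalences gives condition (3). So $\mathscr{H}\mathrm{om}_{\mathbb{E}}(X,Y)$ is an $\cA$-torsor.

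For part (2), the statement ``left and right actions are equivalent'' is literally the content of a Lemma proved earlier in the excerpt for $\cAut_{\mathbb{E}}(X)$: the right action is $\phi\mapsto \eta_X(h)\circ\phi$ (using the codomain's unitor $r_Y$ to land $\eta$ appropriately, or equivalently $\eta_Y$), and the equivalence between the two is witnessed by the $2$-cell $\mu_g$ attached to $g\colon X\to Y$, which intertwines $\eta_X$ and $\eta_Y$ via conjugation (recall $\mu_g\colon g\,\eta_X(-)\,g^{\bullet}\Rightarrow\eta_Y(-)$). The diagram that does the work is the same whiskering square displayed in the earlier proof; I would simply remark that the coherence lemma for $\eta$ (parts 1–3, including the $\rho_{f,g}$ compatibility) guarantees this identification is itself coherent, so left and right $\cA$-torsor structures on $\mathscr{H}\mathrm{om}_{\mathbb{E}}(X,Y)$ agree.

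The main obstacle is bookkeeping rather than conceptual: one must be careful that the monoidality $2$-cell of $\eta_X$ and the associator of $\mathbb{E}$ assemble into an $a_{h,h',\phi}$ satisfying the pentagon \emph{on the nose} as required by the torsor definition, i.e.\ that no extra coherence obstruction appears. This is exactly the kind of compatibility packaged by the coherence Lemma for $\eta$ earlier (in particular the $K_4$-type identity $\rho_{f,g}\cdot\rho_{gf,h}=\rho_{g,h}\cdot\rho_{f,hg}$ and $\eta_{XY}=X\eta_Y$), so the proof is a matter of citing those and drawing the one pentagon; I expect no real difficulty, only care.
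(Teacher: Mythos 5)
Your proposal takes essentially the same approach as the paper: the paper's proof simply exhibits the action $(h,f)\mapsto f\circ\eta_{X}(h)$ and leaves the verification implicit, while you spell out the coherence and invertibility checks (correctly, granting the paper's standing assumption that the relevant $1$-cells are equivalences). No gap; you have just filled in details the paper omits.
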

\begin{proof}
    Again we can consider the action $+\colon  \cA\times \mathscr{H}\text{om}_{\mathbb{E}}(X,Y) \to \mathscr{H}\text{om}_{\mathbb{E}}(X,Y)$ as $(h,f) \mapsto f \circ \eta_{X}(h)$.
\end{proof}
Let $\mathscr{E}_{x,y} \colon = \mathscr{H}$om$_{\mathbb{E}}(YX,XY)$ for $x,y \in \pi_{0}(\mathbb{E})$, and for chosen $X,Y \in \text{Ob}(\mathbb{E})$ that represent the classes $x,y$, respectively. Now we define the functors:
\begin{equation*}+_{1}\colon  \mathscr{E}_{x,y}\wedge^{\cA}\mathscr{E}_{x',y}\to \mathscr{E}_{xx',y},\ \ \ +_{2}\colon  \mathscr{E}_{x,y}\wedge^{\cA}\mathscr{E}_{x,y'}\to \mathscr{E}_{x,yy'}.\end{equation*}

For $+_{1}\colon$  $(f,f')\in \text{Ob}(\mathscr{E}_{x,y})\times \text{Ob}(\mathscr{E}_{x',y}) \mapsto X\cdot f'\circ f\cdot X' = YXX'\xrightarrow{f\cdot X'} XYX' \xrightarrow{Xf'} XX'Y $
A map $[(\alpha,h,\alpha')]$ maps to the following morphism in $\mathscr{H}$om$_{\mathbb{E}}(YXX',XX'Y)$, i.e., the following 2-morphism in $\mathbb{E}$:
\begin{equation*}
\adjustbox{scale = 0.8, center}{%
\begin{tikzcd}
Y(XX') \arrow[d, "\cong"']                             &                                       &                           &                                                            &                                      &                                                                &  & (XX')Y                                                                                    \\
(YX)X' \arrow[d, "\eta_{YX}(h)X'"'] \arrow[rrd, "fX'"] & {} \arrow[ld, "\alpha X", Rightarrow, shorten <= 2.0em, shorten >= 0em] &                           &                                                            &                                      & {}                                                             &  & X(X'Y) \arrow[u, "\cong"]                                                                 \\
(YX)X' \arrow[rr, "gX'"']                              &                                       & (XY)X' \arrow[r, "\cong"] & X(YX') \arrow[rr, "Xf'"] \arrow[rr, "Xg'"', bend right=60] & {} \arrow[d, "X\alpha'", Rightarrow] & X(X'Y) \arrow[rr, "\eta_{X'Y}(h)"'] \arrow[rru, "id_{X(X'Y)}"] &  & X(X'Y) \arrow[u, "X\eta_{X'Y}(h^{\bullet})"'] \arrow[llu, "X\kappa_{X'Y}(h)", Rightarrow, shorten <= 2.3em, shorten >= 2.5em, xshift = 40, yshift = -7] \\
&                                       &                           &                                                            & {}                                   &                                                                &  &                                                                                          
\end{tikzcd}}
\end{equation*}
Now $+_{1}$ and similarly defined $+_{2}$ must satisfy the following conditions.
\begin{enumerate}
    \item Existence of associativity functor for both, $+_{1}$ and $+_{2}$ with a natural transformation satisfying the associahedron $K_{5}$.
    \item Existence of a functor $ca\colon  (-+_{1}-)+_{2}(-+_{1}-) \to (-+_{2}-)+_{1}(-+_{2}-)$ with a natural transformation $\chi_{-,-,-,-}$.
    \item Coherence of $\chi_{-,-,-,-}\colon$ 
        \begin{enumerate}
            \item (3,2)-coherence axiom.
            \item (2,3)-coherence axiom.     
        \end{enumerate}
\end{enumerate}
These give rise to cocycles with value in the Picard groupoid $\cA$ as described in detail above.

\section{Fully Symmetric case}\label{fullSym}
    \subsection{Biextensions and Symmetric Monoidal Categories}\label{biExtSymMonCat}
    If we start with a symmetric monoidal category $\mathscr{E}$, we can again consider a biextension of $(\pi_{0}\mathscr{E}\times \pi_{0}\mathscr{E})$ by $\pi_{1}(\mathscr{E})$ as in section \ref{biExtMonCat}. The symmetry condition gives us two obvious cocycles as follows.
    \subsubsection{Cocycles related to the Biextension} 
    Symmetry implies $X_{x,y} +_1 X_{x',y} = X_{x',y}+_1 X_{x,y}$, i.e, \begin{equation*}A_{x,x';y} = A_{x',x;y}\end{equation*} because $xx' = x'x \in \pi_{0}(\mathscr{E})$ due to the presence of the braiding isomorphism between $XX'$ and $X'X$. Similarly, for $+_2$, we get the following cocycle condition:
    \begin{equation*}B_{x;y,y'} = B_{x;y',y}.\end{equation*}
    We still have the other cocycle conditions due to associativity and compatibility as described in section \ref{biExtMonCat}.
    
    \subsection{Biextensions and Symmetric Monoidal bi-Categories}\label{biExtSymMonbiCat}
    For a given symmetric monoidal bi-category $\mathbb{E}$, we can once again consider the biextension as described in section \ref{biextMonbiCat}. Now the two partial composition laws must satisfy the commutativity condition along with associativity and compatibility of $+_1$ and $+_2$. This gives rise to some more cocycle conditions including the existing ones as described earlier.
    \subsubsection{Cocycles related to the Biextension} 
    For chosen objects $X_{x,y}$ of the category $\mathscr{E}_{x,y}$ we can again write 
    \begin{equation*}f_{x,x';y}\colon  X_{x,y} +_{1} X_{x',y} \to A_{x,x';y} + X_{xx',y} \end{equation*}
    \begin{equation*}g_{x;y,y'}\colon  X_{x,y} +_{2} X_{x,y'} \to B_{x;y,y'} + X_{x,yy'} \end{equation*}

    In the presence of symmetry, we get the following extra structure.

    \begin{equation*}
\begin{tikzcd}
{\mathscr{E}_{x,y}\wedge \mathscr{E}_{x',y}} \arrow[d] \arrow[r]                                & {\mathscr{E}_{xx',y}} \arrow[d, "id"] &  & {\mathscr{E}_{x,y}\wedge \mathscr{E}_{x,y'}} \arrow[d] \arrow[r]                                & {\mathscr{E}_{x,yy'}} \arrow[d, "id"] \\
{\mathscr{E}_{x',y}\wedge \mathscr{E}_{x,y}} \arrow[r] \arrow[ru, "{\mu_{x,x';y}}", Rightarrow] & {\mathscr{E}_{xx',y}}                 &  & {\mathscr{E}_{x,y'}\wedge \mathscr{E}_{x,y}} \arrow[r] \arrow[ru, "{\mu_{x;y,y'}}", Rightarrow] & {\mathscr{E}_{x,yy'}}                
\end{tikzcd}
\end{equation*}
    In other words, we get the morphisms in $\cA$ as follows:
    \begin{equation*}\mu_{x,x';y}\colon A_{x,x';y} \to A_{x',x;y} \end{equation*}
    \begin{equation*}\mu_{x;y,y'}\colon B_{x;y,y'} \to B_{x;y',y} \end{equation*}
    These must satisfy the following cocycle conditions.

    \begin{enumerate}
        \item Interaction with syllepsis $\gamma$.

        \begin{equation*}
\begin{tikzcd}
    & {\mathscr{E}_{x,y}\wedge \mathscr{E}_{x',y}} \arrow[d,"{\tau}"] \arrow[r] \arrow[dd, bend right=65,"{id}"']                                          & {\mathscr{E}_{xx',y}} \arrow[d, "id"] &                                               & {\mathscr{E}_{x,y}\wedge \mathscr{E}_{x,y'}} \arrow[d,"{\tau}"] \arrow[r] \arrow[dd, bend right=65,"{id}"']                                          & {\mathscr{E}_{x,yy'}} \arrow[d, "id"] \\
{} \arrow[r, "{\gamma_{x,x';y}}", Rightarrow, xshift = 28, yshift = 13, shorten <= 0.2em, shorten >= 0em] & {\mathscr{E}_{x',y}\wedge \mathscr{E}_{x,y}} \arrow[d,"{\tau}"] \arrow[r] \arrow[dd, bend right=65,"{id}"'] \arrow[ru, "{\mu_{x,x';y}}", Rightarrow] & {\mathscr{E}_{xx',y}} \arrow[d, "id"] & {} \arrow[r, "{\gamma_{x;y,y'}}", Rightarrow, Rightarrow, xshift = 28, yshift = 13, shorten <= 0.2em, shorten >= 0em] & {\mathscr{E}_{x,y'}\wedge \mathscr{E}_{x,y}} \arrow[d,"{\tau}"] \arrow[r] \arrow[dd, bend right=65,"{id}"'] \arrow[ru, "{\mu_{x;y,y'}}", Rightarrow] & {\mathscr{E}_{x,yy'}} \arrow[d, "id"] \\
{} \arrow[r, "{\gamma_{x',x;y}}", Rightarrow, Rightarrow, xshift = 28, yshift = -13, shorten <= 0.2em, shorten >= 0em] & {\mathscr{E}_{x,y}\wedge \mathscr{E}_{x',y}} \arrow[r] \arrow[d,"{\tau}"] \arrow[ru, "{\mu_{x',x;y}}", Rightarrow]                           & {\mathscr{E}_{xx',y}} \arrow[d, "id"] & {} \arrow[r, "\gamma_{x;y'y}", Rightarrow, Rightarrow, xshift = 28, yshift = -13, shorten <= 0.2em, shorten >= 0em]    & {\mathscr{E}_{x,y}\wedge \mathscr{E}_{x,y'}} \arrow[d,"{\tau}"] \arrow[r] \arrow[ru, "{\mu_{x;y',y}}", Rightarrow]                           & {\mathscr{E}_{x,yy'}} \arrow[d, "id"] \\
    & {\mathscr{E}_{x',y}\wedge \mathscr{E}_{x,y}} \arrow[r] \arrow[ru, "{\mu_{x,x';y}}", Rightarrow]                                     & {\mathscr{E}_{xx',y}}                 &                                               & {\mathscr{E}_{x,y'}\wedge \mathscr{E}_{x,y}} \arrow[r] \arrow[ru, "{\mu_{x;y,y'}}", Rightarrow]                                     & {\mathscr{E}_{x,yy'}}                
\end{tikzcd}
\end{equation*}
        The syllepsis $\gamma$ can also be considered as a morphism of $\cA$ as follows:
        \begin{equation*}\gamma_{x,x';y}\colon  A_{x,x';y} \to A_{x,x';y} \end{equation*}
        \begin{equation*}\gamma_{x;y,y'}\colon  B_{x;y,y'} \to B_{x;y,y'} \end{equation*}
        So from the diagram above, we get the following two cocycle conditions.
        \begin{equation*}\mu_{x,x';y}\cdot \gamma_{x',x;y} = \gamma_{x,x';y}\cdot \mu_{x,x';y} \end{equation*}
        \begin{equation*}\mu_{x;y,y'}\cdot \gamma_{x;y',y} = \gamma_{x;y,y'}\cdot \mu_{x;y,y'} \end{equation*}
        \item \begin{enumerate}
            \item (3,1)-coherence axiom for $\mu.$ This arises from the interaction of $\mu$ with the braiding.

        \begin{equation*}
\begin{tikzcd}
{\mathscr{E}_{xx'x'',y}} \arrow[rrr,"id"]             &                                                                                                                                              &                                                                                                                                                                      & {\mathscr{E}_{xx'x'',y}}             \\
& {(\mathscr{E}_{x,y}\wedge\mathscr{E}_{x',y})\wedge \mathscr{E}_{x'',y}} \arrow[d, "\tau\wedge id"] \arrow[r, "\tau"] \arrow[lu]                 & {\mathscr{E}_{x'',y}\wedge(\mathscr{E}_{x,y}\wedge \mathscr{E}_{x',y})} \arrow[d, "id\wedge \tau"'] \arrow[ru] \arrow[llu, "{\mu_{xx',x'';y}}"', Rightarrow, shorten <= 0.0em, shorten >= 0.5em, xshift = 30]               &                                   \\
& {(\mathscr{E}_{x',y}\wedge \mathscr{E}_{x,y})\wedge \mathscr{E}_{x'',y}} \arrow[r, "\tau"'] \arrow[ld] \arrow[luu, "{\mu_{x,x';y}}", Rightarrow,yshift = -5] & {\mathscr{E}_{x'',y}\wedge (\mathscr{E}_{x',y}\wedge \mathscr{E}_{x,y})} \arrow[rd] \arrow[lld, "{\mu_{xx',x'';y}}", Rightarrow, shorten <= 0.0em, shorten >= 0.5em, xshift = 30] \arrow[ruu, "{\mu_{x,x';y}}"', Rightarrow, yshift = -5] &                                   \\
{\mathscr{E}_{xx'x'',y}} \arrow[uuu,"id"] \arrow[rrr,"id"'] &                                                                                                                                              &                                                                                                                                                                      & {\mathscr{E}_{xx'x'',y}} \arrow[uuu,"id"']
\end{tikzcd}
\end{equation*}
This gives us the following cocycle condition.
\begin{equation*}\mu_{xx',x'';y}\cdot \mu_{x,x';y} = \mu_{x,x';y}\cdot \mu_{xx',x'';y} \end{equation*}

\item (3,1)-coherence axiom for $\mu.$

\begin{equation*}
\begin{tikzcd}
{\mathscr{E}_{x,yy'y''}} \arrow[rrr, "id"]                    &                                                                                                                                                  &                                                                                                                                                                             & {\mathscr{E}_{x,yy'y''}}                    \\
& {(\mathscr{E}_{x,y}\wedge\mathscr{E}_{x,y'})\wedge \mathscr{E}_{x,y''}} \arrow[d, "\tau\wedge id"] \arrow[r, "\tau"] \arrow[lu]                  & {\mathscr{E}_{x,y''}\wedge(\mathscr{E}_{x,y}\wedge \mathscr{E}_{x,y'})} \arrow[d, "id\wedge \tau"'] \arrow[ru] \arrow[llu, "{\mu_{x;yy',y''}}"', Rightarrow, shorten <= 0.0em, shorten >= 0.5em, xshift = 30]                &                                             \\
& {(\mathscr{E}_{x,y'}\wedge \mathscr{E}_{x,y})\wedge \mathscr{E}_{x,y''}} \arrow[r, "\tau"'] \arrow[ld] \arrow[luu, "{\mu_{x;y,y'}}", Rightarrow, yshift = -5] & {\mathscr{E}_{x,y''}\wedge (\mathscr{E}_{x,y'}\wedge \mathscr{E}_{x,y})} \arrow[rd] \arrow[lld, "{\mu_{x;yy',y''}}", Rightarrow, shorten <= 0.0em, shorten >= 0.5em, xshift = 30] \arrow[ruu, "{\mu_{x;y,y'}}"', Rightarrow, yshift = -5] &                                             \\
{\mathscr{E}_{x,yy'y''}} \arrow[uuu, "id"] \arrow[rrr, "id"'] &                                                                                                                                                  &                                                                                                                                                                             & {\mathscr{E}_{x,yy'y''}} \arrow[uuu, "id"']
\end{tikzcd}
\end{equation*}
This gives us the following cocycle condition.
\begin{equation*}\mu_{x;yy',y''}\cdot \mu_{x;y,y'} = \mu_{x;y,y'}\cdot \mu_{x;yy',y''} \end{equation*}
\end{enumerate}
        \item (2,2)-coherence axiom for $\mu$. Interaction between two $\mu$'s via $\chi$.
See diagram \ref{diagram3}. This gives us the following condition:
\begin{equation*}\mu_{xx';y,y'} \cdot  (\mu_{x,x';y} + \mu_{x,x';y'}) \cdot \chi_{x,x';y,y'}  = \chi_{x',x;y',y}\cdot \mu_{x,x';yy'} \cdot (\mu_{x;y,y'} + \mu_{x';y,y'}).\end{equation*}
    \end{enumerate}
    \subsection{Biextensions and MacLane Cohomology}\label{HML}  
    For any group $G$, and a Picard category $\cA$, we can construct the following diagram of cohomology groups. Here the horizontal maps are a part of long exact sequences that can arise from the short exact sequence $0\to \Sigma(\pi_{1}\cA)\to \cA\to \pi_{0}\cA\to 0$, and the vertical maps arise from the suspension on the first entry. As we move sufficiently down, we enter the stable range. After getting in the stable range for cohomology with values in a group, we can chase the diagram to verify that the sequence of stable cohomology with values in a Picard category is also exact.  

    \begin{equation*}
\begin{tikzcd}
\cdots \arrow[r] & {H^3(K(G,1),\pi_1\cA)} \arrow[r]                  & {H^2(K(G,1),\cA)} \arrow[r]           & {H^2(K(G,1),\pi_0\cA)} \arrow[r]                     & \cdots \\
\cdots \arrow[r] & {H^4(K(G,2),\pi_1\cA)} \arrow[u] \arrow[r]        & {H^3(K(G,2),\cA)} \arrow[u] \arrow[r] & {H^3(K(G,2),\pi_0\cA)} \arrow[r] \arrow[u]           & \cdots \\
\cdots \arrow[r] & {H^5(K(G,3),\pi_1\cA)} \arrow[u] \arrow[r]        & {H^4(K(G,3),\cA)} \arrow[u] \arrow[r] & {H^4(K(G,3),\pi_0\cA)} \arrow[r] \arrow[u, "\cong"'] & \cdots \\
& {H^2_{st}(G,\pi_1\cA)} \arrow[u, "\cong"] \arrow[r] & H^{1}_{st}(G,\cA) \arrow[r]                                              & {H^1_{st}(G,\pi_0\cA)} \arrow[u, "\cong"']             &       
\end{tikzcd}
\end{equation*}
As shown by Eilenberg and MacLane in \cite{HML}, the stable cohomology of Eilenberg MacLane spaces is isomorphic to the cohomology of the $Q$-complex. 
\subsubsection{$Q$-complex for a symmetric monoidal category}\label{QSMCat}
For a given symmetric monoidal category $\mathscr{E}$, we can consider the extension $0\to \Sigma (\pi_{1}(\mathscr{E}))\to \mathscr{E}\to \pi_0(\mathscr{E})\to 0$ as earlier. 
Let \adjustbox{scale = 1}{%
\begin{tikzcd}
x \arrow[r, no head] & y \arrow[r, no head] & z
\end{tikzcd}
    } denote an element in $Q_{1}(\pi_0\mathscr{E})$, i.e., $y = x+z$. Note that, since $\mathscr{E}$ is braided, $x+z = z+x$ for every $x,z \in \pi_0\mathbb{E}$. Similarly, let 
    \begin{tikzcd}
x \arrow[r, no head] & y \arrow[r, no head] & z
\end{tikzcd} an element in $Q_{1}(\cA)$ if there is an isomorphism $x+z\xrightarrow{\cong} y$.
Now let us consider a section $s\colon  \pi_0\mathscr{E} \to \mathscr{E}$. Due to the monoidal structure of $\mathscr{E}$, for each $x,y \in \pi_{0}\mathscr{E}$ and the fact that $s(-)$ is an $\pi_1$-torsor, we have a morphism of $\pi_1$-torsors, $\lambda_{x,y}\colon s(x) \wedge^{\pi_1} s(y) \xrightarrow{\cong} s(x+y)$. This gives us elements of the form \begin{tikzcd}
s(x) \arrow[r, no head] & s(x+y) \arrow[r, no head] & s(y)
\end{tikzcd} in $Q_{1}(\cA)$. Moreover, these must satisfy the following cocycle conditions, i.e., the element mentioned below must lie in $Q_{2}(\cA)$.

\begin{equation*}
\begin{tikzcd}
s(x) \arrow[r, no head] \arrow[d, no head]              & {s(x+y)} \arrow[r, no head] \arrow[d, no head] & s(y) \arrow[d, no head]              \\
{s(x+z) } \arrow[r, no head] \arrow[d, no head] & \bullet \arrow[r, no head] \arrow[d, no head]           & {s(y+t)} \arrow[d, no head] \\
s(z) \arrow[r, no head]                                 & {s(z+t)} \arrow[r, no head]                    & s(t)                                
\end{tikzcd}
\end{equation*}
In other words, the existence of this cube implies that $s(x+y) + s(z+t) \cong s(x+z) + s(y+t)$. Notice that this single condition represents associativity as well as commutativity. If we make $z = 0$, the diagram says $s(x+y) + s(t) \cong s(x)+s(y+t)$, similarly, making $x,t = 0$, we get $s(y) + s(z) = s(z) + s(y)$. 

\subsubsection{$Q$-complex for a symmetric monoidal bi-category}
In the case of a symmetric monoidal bi-category $\mathbb{E}$, in addition to the previous structure, for each $x,y \in \pi_0\mathbb{E}$, we have $\lambda_{x,y}\colon s(x) + s(y) \xrightarrow{\cong} s(x+y) + c(x,y)$ for a $c(x,y)\in \text{Ob}(\pi_1\mathbb{E})$. Hence, in this case, we have a non-trivial element of $Q_{2}(\cA)$

\begin{equation*}
\begin{tikzcd}
s(x) \arrow[r, no head] \arrow[d, no head]              & {s(x+y) + c(x,y)} \arrow[r, no head] \arrow[d, no head]                                       & s(y) \arrow[d, no head]              \\
{s(x+z) + c(x,z)} \arrow[r, no head] \arrow[d, no head] & s(x+y+z+t)+\theta \begin{pmatrix} x&y\\z&t\end{pmatrix} \arrow[r, no head] \arrow[d, no head] & {s(y+t) + c(y,t)} \arrow[d, no head] \\
s(z) \arrow[r, no head]                                 & {s(z+t) + c(z,t)} \arrow[r, no head]                                                          & s(t)                                
\end{tikzcd}
\end{equation*}
Here $\theta$
$\begin{pmatrix}
    x & y\\
    z & t
\end{pmatrix}\colon$  $c(x+y,z+t) + c(x,y) + c(z,t) \to c(x+z,y+t) + c(x,z) + c(y,t)$ is a morphism in $\cA$ representing the pentagon for associator in $\mathbb{E}$. This must satisfy the following (4,4)-coherence axiom for extensions.
\begin{equation*}
\adjustbox{scale= 0.7}{%
\begin{tikzcd}
((s(x)+s(y))+(s(z)+s(t)))+((s(a)+s(b))+(s(c)+s(d))) \arrow[r, "ca"] \arrow[dd, "ca+ca"'] & ((s(x)+s(y))+(s(a)+s(b)))+((s(z)+s(t))+(s(c)+s(d))) \arrow[dd, "ca+ca"] \\
&                                                                         \\
((s(x)+s(z))+(s(y)+s(t)))+((s(a)+s(c))+(s(b)+s(d))) \arrow[dd, "ca"']                    & ((s(x)+s(a))+(s(y)+s(b)))+((s(z)+s(c))+(s(t)+s(d))) \arrow[dd, "ca"]    \\
&                                                                         \\
((s(x)+s(z))+(s(a)+s(c)))+((s(y)+s(t))+(s(b)+s(d))) \arrow[r, "ca+ca"']                  & ((s(x)+s(a))+(s(z)+s(c)))+((s(y)+s(b))+(s(t)+s(d)))                    
\end{tikzcd}}
\end{equation*}
After expanding this diagram using the monoidal product, we get the following condition for $\theta$.

\begin{equation*}
\begin{multlined}
\theta\begin{pmatrix}
    x+y & z+t\\ a+b & c+d
\end{pmatrix} \cdot \left(\theta\begin{pmatrix}
    x & y\\
    a & b
\end{pmatrix} +\theta\begin{pmatrix}
    z & t\\
    c & d
\end{pmatrix}\right) \cdot \theta \begin{pmatrix}
    x+a & y+b\\
    z+c & t+d
\end{pmatrix} = \\ \left(\theta\begin{pmatrix}
    x & y\\
    z & t
\end{pmatrix} +\theta\begin{pmatrix}
    a & b\\
    c & d
\end{pmatrix}\right) \cdot \theta \begin{pmatrix}
    x+z & y+t\\
    a+c & b+d
\end{pmatrix} \cdot \left(\theta\begin{pmatrix}
    x & z\\
    a & c
\end{pmatrix} +\theta\begin{pmatrix}
    y & t\\
    b & d
\end{pmatrix}\right)
\end{multlined}
\end{equation*}
To analyze the commutativity, we need to consider the compatibility with 16 variables, instead of just the 8 variables. But this becomes too difficult to compute. So we instead use the same technique with biextensions. We consider a biextension for a given monoidal bi-category and put the $Q$-complex structure over $\cA, \mathbb{E}$, and $\pi_0\mathbb{E} \times \pi_0\mathbb{E}$, we get the following two partial composition laws as earlier.
\begin{equation*}X_{x,y} +_1 X_{x',y} \xrightarrow{\cong} X_{x+x',y} +f(x,x';y)\end{equation*}
\begin{equation*}X_{x,y} +_2 X_{x,y'} \xrightarrow{\cong} X_{x,y+y'} + g(x;y,y')\end{equation*} 
Here $X_{-,-}\colon \pi_0\mathbb{E}\times\pi_0\mathbb{E} \to \mathbb{E}$ is a section, and $f(-,-;-), g(-;-,-)\in \text{Ob}(\cA)$. Let us denote the maps $ca_{1}$ and $ca_{2}$ that arise by commutativity and associativity on $+_1$ and $+_2$ individually as follows.
\begin{equation*}ca_{1}\colon  (X_{x,y} +_1 X_{x',y})+_1 (X_{x'',y}+_1 X_{x''',y}) \to (X_{x,y} +_1 X_{x'',y})+_1 (X_{x',y}+_1 X_{x''',y}) \end{equation*}
\begin{equation*}ca_{2}\colon  (X_{x,y} +_2 X_{x,y'})+_2 (X_{x,y''}+_2 X_{x,y'''}) \to (X_{x,y} +_2 X_{x,y''})+_2 (X_{x,y'}+_2 X_{x,y'''}) \end{equation*}

Similarly, we denote the morphism due to the compatibility of $+_1$ and $+_2$ by $ca_{12}$ as follows.
\begin{equation*}ca_{12}\colon  (X_{x,y} +_1 X_{x',y})+_2 (X_{x,y'}+_1 X_{x',y'}) \to (X_{x,y} +_2 X_{x,y'})+_1 (X_{x',y}+_2 X_{x',y'}) \end{equation*}
Now we can break the problem of considering 16 variables into two parts—the (4,2)-coherence axiom and the (2,4)-coherence axiom. The maps $ca_1, ca_{12}$ must satisfy the following (4,2)-coherence axiom (diagram \ref{4,2-coherence}). After expanding this diagram using the partial composition laws we get the following cocycle conditions.

\begin{equation*}
\begin{multlined}
\chi\begin{pmatrix}
    x+x' & x''+x''' \\
    y & y'
\end{pmatrix} \cdot \left(\chi \begin{pmatrix}
    x & x' \\
    y & y'
\end{pmatrix} + \chi \begin{pmatrix}
    x'' & x'''\\
    y & y'
\end{pmatrix}\right) \cdot \theta\left(
    \begin{pmatrix}
        x & x' \\
        x'' & x'''
    \end{pmatrix} ; y+y' \right) = \\
  \left(\theta\left(
    \begin{pmatrix}
        x & x' \\
        x'' & x'''
    \end{pmatrix} ; y \right)+ \theta\left(
    \begin{pmatrix}
        x & x' \\
        x'' & x'''
    \end{pmatrix} ; y' \right) \right)\cdot \chi\begin{pmatrix}
        x+x'' & x'+x'''\\
        y & y'
    \end{pmatrix}\cdot \left(\chi \begin{pmatrix}
    x & x'' \\
    y & y'
\end{pmatrix} + \chi \begin{pmatrix}
    x' & x'''\\
    y & y'
\end{pmatrix}\right)
\end{multlined}
\end{equation*}

Here $\theta$ and $\chi$ are defined as follows.
\begin{equation*}
  \begin{multlined}
    \chi
    \begin{pmatrix}
      x & x' \\
      y & y'
    \end{pmatrix}\colon
    g(x+x';y,y') + f(x,x';y) + f(x,x';y')\lto \\
    f(x,x';y+y') + g(x;y,y') + g(x';y,y'),
  \end{multlined}
\end{equation*}
whereas $\theta \left(\begin{pmatrix}
    x & x' \\
    x'' & x'''
\end{pmatrix}; y \right)$ is the following morphism in $\cA$:
\begin{equation*}
\begin{split}
  f(x+x',x''+x''';y) + f(x,x';y) &+ f(x'',x''';y) \lto \\
  f(x+x'', x'+x''';y) &+ f(x,x'';y) + f(x',x''';y).
\end{split}
\end{equation*}

Similarly, we also need the (2,4)-coherence axiom for $ca_2, ca_{12}$ where we consider $x,x'$ and $y,y',y'',y'''$. One can check that by making $x = 0$ in the (4,2)-coherence axiom, we get back the (3,2)-coherence axiom that we gave in section \ref{CocycleBiext}. Similarly, for the (2,3)-coherence axiom from the (4,2)-coherence axiom. We already saw in section \ref{QSMCat} that we get back associativity and commutativity by restricting some entries of $\theta$ to 0. So overall, the cocycles using $Q$-complex capture all the information in the case of full symmetry.
\begin{theorem}\label{FinalThm}
Let $\mathbb{E}$, $\cA$ be as above. The $\cA$-torsor $\mathscr{E}$ can be represented explicitly by cocyles on $\pi_0\mathbb{E}\times \pi_0\mathbb{E}$ with values in $\cA$, and if $\mathbb{E}$ is symmetric, $\mathscr{E}$ is an alternating biextension. 
\end{theorem}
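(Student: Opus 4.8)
The statement splits into an \emph{explicit-cocycle} assertion and an \emph{alternating} assertion, and I would prove them in that order.

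\emph{Explicit cocycles.} Essentially everything here has been prepared in Sections~\ref{ExtMonbiCat}, \ref{biextMonbiCat}, and~\ref{CocycleBiext}. Fix, for each $(x,y)\in\pi_0\mathbb{E}\times\pi_0\mathbb{E}$, representatives $X,Y$ and an object $X_{x,y}$ of the fiber $\mathscr{E}_{x,y}=\mathscr{H}\mathrm{om}_{\mathbb{E}}(YX,XY)$. The partial composition functors produce morphisms $f_{x,x';y}\colon X_{x,y}+_{1}X_{x',y}\to A_{x,x';y}+X_{xx',y}$ and $g_{x;y,y'}\colon X_{x,y}+_{2}X_{x,y'}\to B_{x;y,y'}+X_{x,yy'}$ with $A_{x,x';y},B_{x;y,y'}\in\Ob(\cA)$; the associativity $2$-cells of $+_{1}$ (resp.\ $+_{2}$) yield the pentagon $2$-cells $\theta$ in the $x$-variables with $y$ a spectator (resp.\ in the $y$-variables with $x$ a spectator), and the compatibility functor $ca$ yields $\chi_{x,x';y,y'}$. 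By the two $K_5$-associahedra, applied variable by variable exactly as in Theorem~\ref{thmExt}, together with the $(3,2)$- and $(2,3)$-coherence axioms for $\chi$, the tuple $(f,g,A,B,\theta,\chi)$ satisfies precisely the cocycle identities in the group $\pi_0\mathbb{E}\times\pi_0\mathbb{E}$ with values in $\cA$ recorded in Section~\ref{CocycleBiext}; changing the chosen $X_{x,y}$ modifies the tuple by a coboundary. This yields the explicit description of $\mathscr{E}$.

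\emph{Alternating.} I first have to pin down the categorical analogue of the notion recalled in the introduction: $\mathscr{E}$ is \emph{anti-symmetric} if $\mathscr{E}\wedge^{\cA}\sigma^{*}\mathscr{E}$ is trivial as a symmetric biextension, where $\sigma$ flips $\pi_0\mathbb{E}\times\pi_0\mathbb{E}$; anti-symmetry induces a canonical trivialization of the $\cA$-torsor $(\Delta^{*}\mathscr{E})^{2}$, and $\mathscr{E}$ is \emph{alternating} if $\Delta^{*}\mathscr{E}$ itself admits a trivialization inducing that canonical one. Now suppose $\mathbb{E}$ is symmetric: it carries a braiding adjoint equivalence $b_{X,Y}\colon XY\to YX$, a syllepsis, and the symmetry axiom. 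The object $b_{Y,X}$ lies in $\mathscr{E}_{x,y}$, and the two hexagon $2$-cells of $\mathbb{E}$ say exactly that this canonical family is compatible with $+_{1}$ and with $+_{2}$ up to the $2$-cells $\mu_{x,x';y}$, $\mu_{x;y,y'}$ of Section~\ref{biExtSymMonbiCat}; the pair $(b_{Y,X},b_{X,Y})$ together with the syllepsis identification $b_{X,Y}\circ b_{Y,X}\cong\id$ then trivializes $\mathscr{E}\wedge^{\cA}\sigma^{*}\mathscr{E}$ as a symmetric biextension, so $\mathscr{E}$ is anti-symmetric. Restricting along $\Delta$, the fiber of $\Delta^{*}\mathscr{E}$ over $x$ is $\mathscr{H}\mathrm{om}_{\mathbb{E}}(XX,XX)$ and the self-braiding $b_{X,X}$ splits it; its square $b_{X,X}\circ b_{X,X}$ is identified with $\id_{XX}$ by the syllepsis, and the symmetry axiom of $\mathbb{E}$ is precisely the assertion that this identification coincides with the canonical trivialization of $(\Delta^{*}\mathscr{E})^{2}$ coming from anti-symmetry. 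Hence $b_{X,X}$ is a compatible splitting and $\mathscr{E}$ is alternating.

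The high-level skeleton above is routine; the work — and the place I expect to get stuck — is the compatibility checking in the last step. One must transport the $2$-cells $\mu$, $\gamma$ and their $(2,2)$- and $(3,1)$-coherence axioms from Section~\ref{biExtSymMonbiCat} through the contracted-product and diagonal-pullback constructions and verify, diagram by diagram, that they assemble into exactly the data of (i) a symmetric trivialization of $\mathscr{E}\wedge^{\cA}\sigma^{*}\mathscr{E}$ and (ii) a splitting of $\Delta^{*}\mathscr{E}$ inducing the canonical trivialization of its square, with the symmetry axiom of $\mathbb{E}$ being the precise input that makes (ii) go through, in parallel with Breen's analysis in the $1$-categorical case.
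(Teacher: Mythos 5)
Your proposal is correct in outline, and its first half coincides with what the paper actually does: the paper offers no separate proof of Theorem~\ref{FinalThm} at all, treating it as a summary of the cocycle data $(f,g,\theta,\chi)$ and coherence conditions accumulated in Sections~\ref{CocycleBiext}, \ref{biextMonbiCat}, and \ref{biExtSymMonbiCat}, repackaged in Section~\ref{HML} through the cubical $Q$-complex and the $(4,2)$- and $(2,4)$-coherence axioms. Where you genuinely diverge is on the \emph{alternating} assertion. The paper stays entirely in cocycle language: it records the extra morphisms $\mu_{x,x';y}$, $\mu_{x;y,y'}$ and the syllepsis $\gamma$ together with their $(3,1)$-, $(2,2)$-, and syllepsis-interaction identities, and lets the theorem's claim rest on the assertion that the $Q$-complex cocycles ``capture all the information in the case of full symmetry.'' You instead import Breen's torsor-theoretic criteria from the introduction — triviality of $\mathscr{E}\wedge^{\cA}\sigma^{*}\mathscr{E}$ as a symmetric biextension, followed by a splitting of $\Delta^{*}\mathscr{E}$ compatible with the induced splitting of its square — and exhibit the braiding and syllepsis of $\mathbb{E}$ as the trivializing data. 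This buys a statement that is actually checkable against a definition (the paper never defines ``alternating'' for categorical biextensions, only recalls the $1$-categorical notion in the introduction, so you rightly had to supply one), at the cost of the compatibility verifications you honestly flag as outstanding; note that the paper does not carry out those verifications either, so your route is, if anything, the more complete of the two, and the translation between your trivialization data and the paper's $(\mu,\gamma)$-cocycles is exactly the dictionary one would need to write down to close the gap in both arguments.
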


\newpage
\appendix

\section{Diagrams}\label{diagrams}
\begin{enumerate}
\begin{multicols}{2}
\item (3,2)-coherence axiom\label{(3,2)-diagram}

\adjustbox{scale = 0.42, angle=-90,right = 1.7in}{%
\begin{tikzcd}
{(\mathscr{E}_{x,y}\wedge \mathscr{E}_{x,y'})\wedge ((\mathscr{E}_{x',y}\wedge \mathscr{E}_{x'',y})\wedge (\mathscr{E}_{x',y'}\wedge \mathscr{E}_{x'',y'}))} \arrow[rrrrdd] \arrow[ddd] &                  &                                                                                                &                                                  &                                                                                                              &                                                                                                                                                      &                                                                                                                                                                                                       &    & {(\mathscr{E}_{x,y}\wedge (\mathscr{E}_{x',y}\wedge \mathscr{E}_{x'',y}))\wedge (\mathscr{E}_{x,y'}\wedge (\mathscr{E}_{x',y'}\wedge \mathscr{E}_{x'',y'}))} \arrow[lldd] \arrow[llllllll]                  \\
&                  &                                                                                                &                                                  &                                                                                                              & \circlearrowleft                                                                                                                                     &                                                                                                                                                                                                       &    &                                                                                                                                                                                                    \\
&                  &                                                                                                & {} \arrow[llld, "{\chi_{x',x'';y,y''}}"', Rightarrow] & {(\mathscr{E}_{x,y}\wedge \mathscr{E}_{x,y'})\wedge (\mathscr{E}_{x'x'',y}\wedge \mathscr{E}_{x'x'',y'})} \arrow[d]  &                                                                                                                                                      & {(\mathscr{E}_{x,y}\wedge \mathscr{E}_{x'x'',y})\wedge (\mathscr{E}_{x,y'}\wedge \mathscr{E}_{x'x'',y'})} \arrow[ldd] \arrow[ll]                                                                              &    &                                                                                                                                                                                                    \\
{(\mathscr{E}_{x,y}\wedge \mathscr{E}_{x,y'})\wedge ((\mathscr{E}_{x',y}\wedge \mathscr{E}_{x',y'})\wedge (\mathscr{E}_{x'',y}\wedge \mathscr{E}_{x'',y'}))} \arrow[rr]                 &                  & {\mathscr{E}_{x,yy'}\wedge (\mathscr{E}_{x',yy'}\wedge \mathscr{E}_{x'',yy'})} \arrow[rr]            &                                                  & {\mathscr{E}_{x,yy'}\wedge \mathscr{E}_{x'x'',yy'}} \arrow[d]                                                     &                                                                                                                                                      &                                                                                                                                                                                                       & {} &                                                                                                                                                                                                    \\
& \circlearrowleft &                                                                                                &                                                  & {\mathscr{E}_{xx'x'',yy'}}                                                                                       & {\mathscr{E}_{xx'x'',y}\wedge \mathscr{E}_{xx'x'',y'}} \arrow[l] \arrow[ldd, "{\chi_{xx',x'';y,y'}}", Rightarrow] \arrow[luu, "{\chi_{x,x'x'';y,y'}}"', Rightarrow] &                                                                                                                                                                                                       &    &                                                                                                                                                                                                    \\
{((\mathscr{E}_{x,y}\wedge \mathscr{E}_{x,y'})\wedge (\mathscr{E}_{x',y}\wedge \mathscr{E}_{x',y'}))\wedge (\mathscr{E}_{x'',y}\wedge \mathscr{E}_{x'',y'})} \arrow[uu] \arrow[rr]      &                  & {(\mathscr{E}_{x,yy'}\wedge \mathscr{E}_{x',yy'})\wedge \mathscr{E}_{x'',yy'}} \arrow[uu] \arrow[rr] &                                                  & {\mathscr{E}_{xx',yy'}\wedge \mathscr{E}_{x'',yy'}} \arrow[u] \arrow[lluu, "{\varphi_{x,x',x''}^{yy'}}", Rightarrow]  &                                                                                                                                                      &                                                                                                                                                                                                       &    &                                                                                                                                                                                                    \\
&                  &                                                                                                & {} \arrow[lllu, "{\chi_{x,x';y,y'}}", Rightarrow]  & {(\mathscr{E}_{xx',y}\wedge \mathscr{E}_{xx',y'})\wedge (\mathscr{E}_{x'',y} \wedge \mathscr{E}_{x'',y'})} \arrow[u] &                                                                                                                                                      & {(\mathscr{E}_{xx',y}\wedge \mathscr{E}_{x'',y})\wedge (\mathscr{E}_{xx',y'}\wedge \mathscr{E}_{x'',y'})} \arrow[ll] \arrow[luu] \arrow[ruuu, "{\varphi_{x,x',x''}^{y}\wedge \varphi_{x,x',x''}^{y'}}"', Rightarrow] &    &                                                                                                                                                                                                    \\
&                  &                                                                                                &                                                  &                                                                                                              & \circlearrowleft                                                                                                                                     &                                                                                                                                                                                                       &    &                                                                                                                                                                                                    \\
{((\mathscr{E}_{x,y}\wedge \mathscr{E}_{x',y})\wedge (\mathscr{E}_{x,y'}\wedge \mathscr{E}_{x',y'}))\wedge (\mathscr{E}_{x'',y}\wedge \mathscr{E}_{x'',y'})} \arrow[rrrruu] \arrow[uuu] &                  &                                                                                                &                                                  &                                                                                                              &                                                                                                                                                      &                                                                                                                                                                                                       &    & {((\mathscr{E}_{x,y}\wedge \mathscr{E}_{x',y})\wedge \mathscr{E}_{x'',y})\wedge (\mathscr{E}_{x,y'} \wedge \mathscr{E}_{x',y'})\wedge \mathscr{E}_{x'',y'})} \arrow[llllllll] \arrow[lluu] \arrow[uuuuuuuu]
\end{tikzcd}}

\item (4,2)-coherence axiom \label{4,2-coherence}

\adjustbox{scale = {0.44}{0.75},angle = -90,right = 1.7in}{%
\begin{tikzcd}
& {((X_{x,y}+_1 X_{x',y})+_2(X_{x,y'}+_1 X_{x',y'}))+_1 ((X_{x'',y}+_1 X_{x''',y})+_2(X_{x'',y'}+_1 X_{x''',y'}))} \arrow[rd, "ca_{12}+_1ca_{12}"]   &                                                                                                                                         \\
{((X_{x,y}+_1 X_{x',y})+_1(X_{x'',y}+_1 X_{x''',y}))+_2 ((X_{x,y'}+_1 X_{x',y'})+_1(X_{x'',y'}+_1 X_{x''',y'}))} \arrow[ru, "ca_{12}"] \arrow[dddd, "ca_1+_2ca_1"'] &                                                                                                                                                    & {((X_{x,y}+_2 X_{x,y'})+_1(X_{x',y}+_2 X_{x',y'}))+_1 ((X_{x'',y}+_2 X_{x'',y'})+_1(X_{x''',y}+_2 X_{x''',y'}))} \arrow[dddd, "ca_{1}"] \\
&                                                                                                                                                    &                                                                                                                                         \\
&                                                                                                                                                    &                                                                                                                                         \\
&                                                                                                                                                    &                                                                                                                                         \\
{((X_{x,y}+_1 X_{x'',y})+_1(X_{x',y}+_1 X_{x''',y}))+_2 ((X_{x,y'}+_1 X_{x'',y'})+_1(X_{x',y'}+_1 X_{x''',y'}))} \arrow[rd, "ca_{12}"']                             &                                                                                                                                                    & {((X_{x,y}+_2 X_{x,y'})+_1(X_{x'',y}+_2 X_{x'',y'}))+_1 ((X_{x',y}+_2 X_{x',y'})+_1(X_{x''',y}+_2 X_{x''',y'}))}                        \\
& {((X_{x,y}+_1 X_{x'',y})+_2 (X_{x,y'}+_1 X_{x'',y'}))+_1 ((X_{x',y}+_1 X_{x''',y})+_2(X_{x',y'}+_1 X_{x''',y'}))} \arrow[ru, "ca_{12}+_1ca_{12}"'] &                                                                                                                                        
\end{tikzcd}
}
\end{multicols}

\item \label{nodes(3,3)}The following diagrams represent categories of the partial composition laws for example, $A$ reads as 
\begin{equation*}\mathscr{E}_{a,x}\wedge \mathscr{E}_{b,x}\wedge \cdots \wedge \mathscr{E}_{c,y}\wedge \mathscr{E}_{c,z}.\end{equation*}

\begin{multicols}{4}
\adjustbox{scale = 0.5}{%
\begin{tikzcd}
z & \bullet \arrow[r, no head] & \bullet \arrow[r, no head] & \bullet                      \\
y & \bullet \arrow[r, no head] & \bullet \arrow[r, no head] & \bullet \arrow[llu, no head] \\
x & \bullet \arrow[r, no head] & \bullet \arrow[r, no head] & \bullet \arrow[llu, no head] \\
  & a                          & b                          & c                          \\
  &                            & A                       &                              
\end{tikzcd}
}
\adjustbox{scale = 0.5}{%
\begin{tikzcd}
z & \bullet \arrow[r, no head] & \bullet \arrow[rd, no head] & \bullet                      \\
y & \bullet \arrow[r, no head] & \bullet \arrow[lu, no head] & \bullet \arrow[u, no head]   \\
x & \bullet \arrow[r, no head] & \bullet \arrow[r, no head]  & \bullet \arrow[llu, no head] \\
  & a                          & b                           & c                            \\
  &                            & B_{1}                       &                             
\end{tikzcd}}

\adjustbox{scale = 0.5}{%
\begin{tikzcd}
z & \bullet \arrow[rd, no head] & \bullet \arrow[r, no head] & \bullet                      \\
y & \bullet \arrow[u, no head]  & \bullet \arrow[r, no head] & \bullet \arrow[lu, no head]  \\
x & \bullet \arrow[r, no head]  & \bullet \arrow[r, no head] & \bullet \arrow[llu, no head] \\
  & a                           & b                          & c                            \\
  &                             & B_{2}                      &                             
\end{tikzcd}
}
\adjustbox{scale = 0.5}{%
\begin{tikzcd}
z & \bullet \arrow[r, no head] & \bullet \arrow[r, no head]  & \bullet                      \\
y & \bullet \arrow[r, no head] & \bullet \arrow[rd, no head] & \bullet \arrow[llu, no head] \\
x & \bullet \arrow[r, no head] & \bullet \arrow[lu, no head] & \bullet \arrow[u, no head]   \\
  & a                          & b                           & c                            \\
  &                            & B_{3}                       &                             
\end{tikzcd}
}

\adjustbox{scale = 0.5}{%
\begin{tikzcd}
z & \bullet \arrow[r, no head]  & \bullet \arrow[r, no head] & \bullet                      \\
y & \bullet \arrow[rd, no head] & \bullet \arrow[r, no head] & \bullet \arrow[llu, no head] \\
x & \bullet \arrow[u, no head]  & \bullet \arrow[r, no head] & \bullet \arrow[lu, no head]  \\
  & a                           & b                          & c                            \\
  &                             & B_{4}                      &                             
\end{tikzcd}
}
\adjustbox{scale = 0.5}{%
\begin{tikzcd}
z & \bullet \arrow[rd, no head] & \bullet \arrow[rd, no head] & \bullet                      \\
y & \bullet \arrow[u, no head]  & \bullet \arrow[u, no head]  & \bullet \arrow[u, no head]   \\
x & \bullet \arrow[r, no head]  & \bullet \arrow[r, no head]  & \bullet \arrow[llu, no head] \\
  & a                           & b                           & c                            \\
  &                             & C_{1}                       &                             
\end{tikzcd}
}

\adjustbox{scale = 0.5}{%
\begin{tikzcd}
z & \bullet \arrow[r, no head]  & \bullet \arrow[r, no head]  & \bullet                      \\
y & \bullet \arrow[rd, no head] & \bullet \arrow[rd, no head] & \bullet \arrow[llu, no head] \\
x & \bullet \arrow[u, no head]  & \bullet \arrow[u, no head]  & \bullet \arrow[u, no head]   \\
  & a                           & b                           & c                            \\
  &                             & C_{2}                       &                             
\end{tikzcd}
}

\adjustbox{scale = 0.5}{%
\begin{tikzcd}
z & \bullet \arrow[rdd, no head] & \bullet \arrow[r, no head] & \bullet                     \\
y & \bullet \arrow[u, no head]   & \bullet \arrow[r, no head] & \bullet \arrow[lu, no head] \\
x & \bullet \arrow[u, no head]   & \bullet \arrow[r, no head] & \bullet \arrow[lu, no head] \\
  & a                            & b                          & c                           \\
  &                              & C_{5}                      &                            
\end{tikzcd}
}

\adjustbox{scale = 0.5}{%
\begin{tikzcd}
z & \bullet \arrow[r, no head] & \bullet \arrow[rdd, no head] & \bullet                    \\
y & \bullet \arrow[r, no head] & \bullet \arrow[lu, no head]  & \bullet \arrow[u, no head] \\
x & \bullet \arrow[r, no head] & \bullet \arrow[lu, no head]  & \bullet \arrow[u, no head] \\
  & a                          & b                            & c                          \\
  &                            & C_{6}                        &                           
\end{tikzcd}
}

\adjustbox{scale = 0.5}{%
\begin{tikzcd}
z & \bullet \arrow[rd, no head] & \bullet \arrow[rdd, no head] & \bullet                    \\
y & \bullet \arrow[u, no head]  & \bullet \arrow[u, no head]   & \bullet \arrow[u, no head] \\
x & \bullet \arrow[r, no head]  & \bullet \arrow[lu, no head]  & \bullet \arrow[u, no head] \\
  & a                           & b                            & c                          \\
  &                             & D_{1}                        &                           
\end{tikzcd}
}

\adjustbox{scale = 0.5}{%
\begin{tikzcd}
z & \bullet \arrow[rdd, no head] & \bullet \arrow[rd, no head] & \bullet                     \\
y & \bullet \arrow[u, no head]   & \bullet \arrow[u, no head]  & \bullet \arrow[u, no head]  \\
x & \bullet \arrow[u, no head]   & \bullet \arrow[r, no head]  & \bullet \arrow[lu, no head] \\
  & a                            & b                           & c                           \\
  &                              & D_{2}                       &                            
\end{tikzcd}
}
\adjustbox{scale = 0.5}{%
\begin{tikzcd}
z & \bullet \arrow[r, no head]  & \bullet \arrow[rdd, no head] & \bullet                    \\
y & \bullet \arrow[rd, no head] & \bullet \arrow[lu, no head]  & \bullet \arrow[u, no head] \\
x & \bullet \arrow[u, no head]  & \bullet \arrow[u, no head]   & \bullet \arrow[u, no head] \\
  & a                           & b                            & c                          \\
  &                             & D_{3}                        &                           
\end{tikzcd}
}
\end{multicols}

\begin{multicols}{2}   
\adjustbox{scale = 0.5,center}{%
\begin{tikzcd}
z & \bullet \arrow[rdd, no head] & \bullet \arrow[r, no head]  & \bullet                     \\
y & \bullet \arrow[u, no head]   & \bullet \arrow[rd, no head] & \bullet \arrow[lu, no head] \\
x & \bullet \arrow[u, no head]   & \bullet \arrow[u, no head]  & \bullet \arrow[u, no head]  \\
  & a                            & b                           & c                           \\
  &                              & D_{4}                       &                            
\end{tikzcd}
}
\adjustbox{scale = 0.5,center}{%
\begin{tikzcd}
z & \bullet \arrow[rdd, no head] & \bullet \arrow[rdd, no head] & \bullet                    \\
y & \bullet \arrow[u, no head]   & \bullet \arrow[u, no head]   & \bullet \arrow[u, no head] \\
x & \bullet \arrow[u, no head]   & \bullet \arrow[u, no head]   & \bullet \arrow[u, no head] \\
  & a                            & b                            & c                          \\
  &                              & E                            &                           
\end{tikzcd}
}
\end{multicols}
\item Interaction between two $\mu$'s via $\chi$.\label{diagram3}

\adjustbox{scale = 0.75, angle=-90, center}{%
\begin{tikzcd}
&                                                                                                                               &                                                                                                                                                                                                                                                                           & {\mathscr{E}_{xx',yy'}}                                   &                                                                                                                                                                                                   &                                                                                                                              &                                                  \\
& {}                                                                                                                            & {\mathscr{E}_{xx',y'}\wedge \mathscr{E}_{xx',y}} \arrow[ru]                                                                                                                                                                                                                  & {}                                                      & {\mathscr{E}_{x',yy'}\wedge \mathscr{E}_{x,yy'}} \arrow[lu]                                                                                                                                          & {}                                                                                                                           &                                                  \\
&                                                                                                                               & {(\mathscr{E}_{x',y'}\wedge \mathscr{E}_{x,y'})\wedge (\mathscr{E}_{x',y}\wedge \mathscr{E}_{x,y})} \arrow[rr, "{c_{x',x;y',y}}"] \arrow[u] \arrow[ru, "{\chi_{x',x;y',y}}", Rightarrow, xshift = 28, yshift = 10] \arrow[lu, "{\mu_{xx';y,y'}}", Rightarrow]                                                 &                                                         & {(\mathscr{E}_{x',y'}\wedge \mathscr{E}_{x',y})\wedge (\mathscr{E}_{x,y'}\wedge \mathscr{E}_{x,y})} \arrow[u] \arrow[ru, "{\mu_{x,x';yy'}}"', Rightarrow]                                               &                                                                                                                              &                                                  \\
{\mathscr{E}_{xx',yy'}} \arrow[rrrddd, bend right,"id"'] \arrow[rrruuu, bend left,"id"] & {(\mathscr{E}_{x',y}\wedge \mathscr{E}_{x,y})\wedge (\mathscr{E}_{x',y'}\wedge \mathscr{E}_{x,y'})} \arrow[ru, "\tau"'] \arrow[l] &                                                                                                                                                                                                                                                                           & {} \arrow[loop, distance=2em, in=305, out=235, yshift = 20]          &                                                                                                                                                                                                   & {(\mathscr{E}_{x,y'}\wedge \mathscr{E}_{x,y})\wedge (\mathscr{E}_{x',y'}\wedge \mathscr{E}_{x',y})} \arrow[lu, "\tau"] \arrow[r] & {\mathscr{E}_{xx',yy'}} \arrow[llluuu, bend right,"id"'] \\
&                                                                                                                               & {(\mathscr{E}_{x,y}\wedge \mathscr{E}_{x',y})\wedge (\mathscr{E}_{x,y'}\wedge \mathscr{E}_{x',y'})} \arrow[rr, "{c_{x,x';y,y'}}"] \arrow[lu, "\tau\wedge\tau"'] \arrow[d] \arrow[rd, "{\chi_{x,x';y,y'}}", Rightarrow, xshift = 28, yshift = -10] \arrow[ld, "{\mu_{x,x';y}\wedge \mu_{x,x';y'}}"', Rightarrow] &                                                         & {(\mathscr{E}_{x,y}\wedge \mathscr{E}_{x,y'})\wedge (\mathscr{E}_{x',y}\wedge \mathscr{E}_{x',y'})} \arrow[ru, "\tau\wedge \tau"] \arrow[d] \arrow[rd, "{\mu_{x;y,y'}\wedge \mu_{x';y,y'}}", Rightarrow] &                                                                                                                              &                                                  \\
    & {}                                                                                                                            & {\mathscr{E}_{xx',y}\wedge \mathscr{E}_{xx',y'}} \arrow[rd]                                                                                                                                                                                                                  & {}                                                      & {\mathscr{E}_{x,yy'}\wedge \mathscr{E}_{x',yy'}} \arrow[ld]                                                                                                                                          & {}                                                                                                                           &                                                  \\
&                                                                                                                               &                                                                                                                                                                                                                                                                           & {\mathscr{E}_{xx',yy'}} \arrow[rrruuu, "id"', bend right] &                                                                                                                                                                                                   &                                                                                                                              &                                                 
\end{tikzcd}
}

\end{enumerate}

\bibliographystyle{hamsalpha}
\bibliography{references}
\end{document}